\newtheorem{theorem}{Theorem}[section]
\newtheorem{conjecture}[theorem]{Conjecture}
\newtheorem{corollary}[theorem]{Corollary}
\newtheorem{example}[theorem]{Example}
\newtheorem{proposition}[theorem]{Proposition}
\newtheorem{classictheorem}{Theorem}
\title{Angle chains and pinned variants}
\date{\today}
\author{
Eyvindur Ari Palsson\thanks{Department of Mathematics, Virginia Tech, Blacksburg, VA 24061. {\sl palsson@vt.edu}. Supported by Simons Foundation Grant \#360560.}
\and
Steven Senger\thanks{Department of Mathematics, Missouri State University, Springfield, MO 65897. {\sl stevensenger@missouristate.edu}.}
\and
Charles Wolf\thanks{Department of Mathematics, University of Rochester, Rochester, NY 14627.
{\sl charles.wolf@rochester.edu}.}}
\begin{document}
\maketitle
\begin{abstract}
We study a variant of the Erd\H os unit distance problem, concerning angles between successive triples of points chosen from a large finite point set. Specifically, given a large finite set of $n$ points $E$, and a sequence of angles $(\alpha_1,\ldots,\alpha_k)$, we give upper and lower bounds on the maximum possible number of tuples of distinct points $(x_1,\dots, x_{k+2})\in E^{k+2}$ satisfying $\angle (x_j,x_{j+1},x_{j+2})=\alpha_j$ for every $1\le j \le k$ as well as pinned analogues.
\end{abstract}

\section{Introduction}
\subsection{Background}
In \cite{Erd46}, Erd\H os introduced two popular problems in discrete geometry, the unit distance problem and the distinct distances problem. Given a finite point set in the plane, the unit distance problem asks how often a single distance can occur between pairs of points, while the distinct distances problem asks how many distinct distances must be determined by pairs of points. See \cite{BMP, GIS} for surveys of these and related problems. The distinct distances problem was resolved in 2010 by Guth and Katz in \cite{GK} in the plane, but remains open in higher dimensions. By contrast, the unit distance problem has not seen any progress since the work of Spencer, Szemer\' edi, and Trotter \cite{SST84} in $\mathbb{R}^2$, while recent progress has been made in $\mathbb{R}^3$ by Zahl \cite{Zahl19}. In $\mathbb{R}^d$ with $d\geq 4$ the unit distance problem becomes trivial without further restrictions due to the celebrated Lenz example, which we describe in detail below (Theorem \ref{LenzExample}). A variant of these problems involves fixing one of the points from which the distances are counted. These are referred to as pinned variants. The same bounds are conjectured for the pinned Erd\H{o}s distinct distances problem as the unpinned version with the best partial results obtained by Katz and Tardos in \cite{KT04} in the plane. The pinned version of the unit distance problem is trivial since all but one of the points can be placed on a circle around the remaining point.

Another variant of the family of problems proposed by Erd\H os that is important to this paper involves point configurations determined by distances measured between more than two points. One of the most commonly studied configurations is a $(k+1)$-tuple of points where each distance between the $k$ consecutive pairs of points is specified. In the context of the distinct distances problem this was solved in the style of the Guth-Katz argument by Misha Rudnev in \cite{Rudnev19} and Jonathan Passant \cite{Passant20}. Analogous problems have been considered in both finite fields \cite{BCCHIP16} as well as the continuous setting for both pinned and unpinned Falconer type problems for chains \cite{BIT, OT20}. The study of such problems in the unit distance setting was initiated by the first two authors and Sheffer in \cite{PSS} and their results were improved upon by Frankl and Kupavskii \cite{FK19}. These chain variants of the original problems include the original problems as special cases, so it was surprising in the unit distance setting that sharp results were obtained for two-thirds of all chains, while the remaining cases only miss by as much as the best results currently available for the unit distance problem itself. This unexpected development has brought attention to these types of problems, with some follow up work done both on more complicated relationships, as well as replacing distances by dot products \cite{GPRS20, KMS20}.
\subsection{Angles}
The final variant of the family of problems proposed by Erd\H os important to this paper is replacing the distance between two points by the angle made by three points. The important paradigm shift here is that the base configuration now depends on three points as opposed to only two points as in the case of other widely-studied objects, such as distances, dot products, and directions. This poses a unique challenge in that the incidence results ubiquitous in this area tend to look at level sets with respect to a single point, such as circles which encode a fixed distance to a given point. However, in the case of angles, the level set of a single point is already rather complicated, and looking at how level sets of how multiple pairs of points interact is even more so.

To keep track of the various parameters, we borrow notation from related problems on distances in \cite{BIT, PSS}. Specifically, if we fix a $k$-tuple of real numbers, $(\alpha_1, \alpha_2, \dots, \alpha_k)\in(0,\pi)^k$, then a {\bf $k$-chain} of that type is a $(k+2)$-tuple of points, $(x_1, x_2, \dots, x_{k+2}),$ such that for all $j=1,\dots, k,$ we have $\angle(x_j,x_{j+1},x_{j+2})=\alpha_j.$ For example, if we fix a triple of real numbers, $(\alpha, \beta, \gamma)$, a 3-chain of that type will be a set of five points, where the angle determined by the first three points is $\alpha$, the angle determined by the middle three points is $\beta$, and the angle determined by the last three points is $\gamma.$ Given a large finite point set $E$ and a $k$-tuple of angles, $(\alpha_1, \alpha_2, \ldots, \alpha_k),$ we denote the set of $k$-chains determined by $(k+2)$-tuples of points in $E$ by
$$\Lambda_k(E; \alpha_1, \alpha_2, \ldots, \alpha_k):=\left\{ (x_1, x_2, \dots, x_{k+2})\in E^{k+2}:\angle(x_j,x_{j+1},x_{j+2})=\alpha_j\right\}.$$
When context is clear, we suppress the angles and just write $\Lambda_k(E).$ In some cases, we will have all of the $\alpha_j$ equal to a fixed $\alpha$, and will refer to a chain as an $\alpha$ angle $k$-chain. Also, we assume that angles are not integer multiples of $\pi$, as then we could just arrange points along a line and get $n^{k+2}$ instances of a $k$-chain whose angles are multiple of $\pi.$ Moreover, we will also assume that $k$ is like a constant compared to the number of points in a given set. If two quantities, $X(n)$ and $Y(n)$, vary with respect to some natural number parameter, $n$, then we write $X(n) \lesssim Y(n)$ if there exist constants, $C$ and $N$, both independent of $n$, such that for all $n> N$, we have $X(n)\leq CY(n)$. If $X(n) \lesssim Y(n)$ and $Y(n) \lesssim X(n)$, we write $X(n) \approx Y(n).$

In \cite{PachSharir}, Pach and Sharir gave the following upper bound on the size of $\Lambda_1(E),$ the number of triples of points determining a fixed angle. They also showed that their result is sharp for some angles, so we cannot expect to do better in general.
\begin{classictheorem}\label{PS}
Given a large finite point set $E$ of $n$ points in the plane, $$|\Lambda_1(E)|\lesssim n^2 \log n.$$
\end{classictheorem}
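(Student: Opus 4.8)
The plan is to exploit that in any triple realizing the angle the middle point is the apex, so it suffices to count, over all choices of apex $p\in E$, the number of ordered pairs $(x_1,x_3)$ with $\angle(x_1,p,x_3)=\alpha$. For a fixed apex $p$, I would record each other point by the direction of the ray from $p$ on which it lies, and let $f_p(\theta)$ denote the number of points of $E$ on the open ray from $p$ in direction $\theta$. A pair contributes exactly when its two arms point in directions differing by $\alpha$, so the total number of chains is, up to the constant factor coming from the two orientations,
\[
|\Lambda_1(E)|\approx \sum_{p\in E}\ \sum_{\theta} f_p(\theta)\,f_p(\theta+\alpha).
\]
The standing assumption $\alpha\not\equiv 0\Mod{\pi}$ is essential here: it forbids the two arms from lying on a common line, which is precisely what rules out the degenerate collinear configurations that would otherwise force a cubic count.

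The main tool I would bring in is the Szemer\'edi--Trotter theorem in its ``rich lines'' form: the number of lines incident to at least $t$ points of $E$ is $\lesssim n^2/t^3+n/t$. Since an apex contributing a ray with many points must itself lie on a line with many points, this bounds the number of pairs $(p,\rho)$, with $\rho$ a ray from $p$ carrying roughly $t$ points of $E$. The strategy is then to perform a dyadic decomposition of the sum above according to the sizes of the two factors $f_p(\theta)$ and $f_p(\theta+\alpha)$: restrict to scales where $f_p(\theta)\in[s,2s)$ and $f_p(\theta+\alpha)\in[t,2t)$, bound the number of such configurations using the rich-line count at each scale, and sum the resulting geometric series over the $O(\log n)$ relevant dyadic values. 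The extremal (near-diagonal) scales each contribute on the order of $n^2$, and it is the summation over $\log n$ such scales that produces the logarithmic factor, in agreement with the known construction showing the bound is tight when $\alpha$ is a rational multiple of $\pi$.

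The step I expect to be the main obstacle is controlling the correlation between the two arms at angular separation exactly $\alpha$. A naive application of Cauchy--Schwarz in $\theta$ collapses the product into $\sum_\theta f_p(\theta)^2$, which counts collinear triples through $p$ and can be as large as $n^3$ in total; the gain must come entirely from the rigidity imposed by the fixed nonzero angle, which decouples the two ray-directions and forces rich pairs of arms to be rare. Making this decoupling quantitative---so that the rich-line estimates can be applied to both arms simultaneously without double-counting---is the heart of the argument, and is where the Szemer\'edi--Trotter input and the careful dyadic bookkeeping have to be combined with care.
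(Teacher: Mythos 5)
The first thing to note is that the paper does not prove this statement at all: Theorem \ref{PS} is quoted as a classical result of Pach and Sharir \cite{PachSharir}, so there is no internal proof to compare against, and your proposal must stand as a self-contained argument. As such it has a genuine gap, and it is the one you yourself flag in your final paragraph: the ``decoupling'' of the two arms is not an obstacle you expect to overcome, it is the entire theorem, and the tools you have put on the table provably cannot overcome it by themselves. Concretely, write $N_{s,t}$ for the number of configurations (apex $p$, arm-line through $p$ with $\approx s$ points of $E$, rotated arm-line with $\approx t$ points). The only bounds your plan makes available are (i) $N_{s,t}\lesssim$ (incidences with $\approx t$-rich lines) $\lesssim n^2/t^2+n$, and symmetrically in $s$, and (ii) $N_{s,t}\lesssim(\#\ s\text{-rich lines})\cdot(\#\ t\text{-rich lines})$, since two non-parallel lines determine their intersection point. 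Now test the mixed regime $s\approx n^{1/4}$, $t\approx n$ (one enormously rich line whose rotated partners are moderately rich): bound (i) gives $st\,N_{s,t}\lesssim n^{1/4}\cdot n\cdot n=n^{9/4}$, the symmetric version gives $n^{11/4}$, and bound (ii) gives $n^{1/4}\cdot n\cdot n^{5/4}=n^{5/2}$. So the bookkeeping you describe cannot get below $n^{9/4}$ in this regime, and no amount of care in the dyadic summation fixes that; moreover, even in the regimes where per-pair bounds of $n^2$ are available, summing over all $\approx\log^2 n$ dyadic pairs $(s,t)$ yields $n^2\log^2 n$ rather than $n^2\log n$.

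What is missing is a geometric fact that genuinely couples the two arms, and here is one that works: fix a line $\ell'$ and rotate it by $-\alpha$ about each apex $b\in E\cap\ell'$; all these partner lines make the same angle with $\ell'$, so they are parallel and hence pairwise disjoint. Therefore $\sum_{b\in E\cap\ell'}\#\{\text{points of }E\text{ on the partner of }b\}\le n$, so at most $n/s$ of the apices on any single line can have an $s$-rich partner, giving $N_{s,t}\lesssim(\#\ t\text{-rich lines})\cdot\min(t,\,n/s)$. This kills the bad regime above ($N_{n^{1/4},n}\lesssim n^{3/4}$, contribution $n^2$), and it also repairs the double logarithm: for each fixed $t\ge\sqrt n$ one has $\sum_s s\,N_{s,t}\lesssim(\#\ t\text{-rich lines})\cdot n\lesssim(n/t)\cdot n$, so the contribution for fixed $t$ is $\lesssim n^2$, while for $t\le\sqrt n$ (taking $s\le t$ by symmetry) the geometric series $\sum_{s\le t}s\,n^2/t\lesssim n^2$ does the same; summing each case over the $O(\log n)$ dyadic values of $t$ gives $n^2\log n$. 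Your framework (apex decomposition, dyadic richness classes, Theorem \ref{szemTrot} and Theorem \ref{richLines}) is the right skeleton and is close in spirit to the Pach--Sharir argument, but without this disjointness input --- or an equivalent one --- and the single-parameter summation it enables, the proposal is a plan whose central step is unexecuted, not a proof.
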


This work was continued in higher dimensions by Apfelbaum and Sharir \cite{ApfelbaumSharir}. They proved the following two results in three and four dimensions.

\begin{classictheorem}\label{AS3}
Given a large finite point set $E$ of $n$ points in $\mathbb R^3$, $$|\Lambda_1(E)|\lesssim n^\frac{7}{3}.$$
\end{classictheorem}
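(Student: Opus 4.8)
The plan is to reduce the three-dimensional count to a spherical incidence problem, organized by the apex of the angle. Fix a point $y\in E$ to serve as the vertex, and for every other point $p\in E$ record the unit direction $\pi_y(p)=(p-y)/|p-y|\in S^2$; this produces a multiset of $n-1$ points on the unit sphere. Since $\angle(x,y,z)$ is exactly the spherical (geodesic) distance between $\pi_y(x)$ and $\pi_y(z)$, a triple with vertex $y$ contributes to $\Lambda_1(E)$ precisely when $\pi_y(x)$ and $\pi_y(z)$ lie at spherical distance $\alpha$. Hence the number of $\alpha$-angle triples with apex $y$ equals the number of ordered pairs of the projected points at spherical distance exactly $\alpha$, and $|\Lambda_1(E)|$ is the sum of this quantity over all $n$ choices of vertex. (Equivalently, one may phrase this as point--cone incidences, where each ordered pair $(y,x)$ gives a cone of half-angle $\alpha$; central projection from $y$ turns that cone back into the circle below.)

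Next I would bound the per-apex count by a congruent-circle incidence estimate. For a fixed direction $u\in S^2$, the locus of directions at spherical distance $\alpha$ from $u$ is a circle $\gamma_u$ of fixed spherical radius $\alpha$, cut out by a plane at fixed height. Thus pairs at distance $\alpha$ correspond to incidences between the $n-1$ projected points and the $n-1$ circles $\{\gamma_{\pi_y(p)}\}$. The essential feature is that all these circles are congruent: two points of $S^2$ lie on at most two of them, and two of them meet in at most two points. This bounded-multiplicity structure is the one underlying the Spencer--Szemer\'edi--Trotter unit-distance bound, so the number of such incidences, and hence the number of pairs at spherical distance $\alpha$, is $\lesssim (n-1)^{4/3}+(n-1)\lesssim n^{4/3}$. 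Summing over the $n$ possible vertices then yields $|\Lambda_1(E)|\lesssim n\cdot n^{4/3}=n^{7/3}$, with the additive error terms contributing only $O(n^2)$ in total.

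I expect the main obstacle to be the degenerate configurations in which many points of $E$ are collinear with the apex $y$, since these collapse to a single direction and convert the per-apex count into a \emph{weighted} incidence problem, where a direction carrying multiplicity $m$ enters with weight $m$. The saving feature is that points collinear with $y$ are never at spherical distance $\alpha$ from one another, because $\alpha\not\equiv 0,\pi$, so such clusters affect the count only through their weights, and the total weight at each apex is $n-1$. The resolution is therefore either to run the crossing-number argument in its weighted form, or to peel off the high-multiplicity directions and bound their contribution by an independent count of collinear triples, verifying that this stays below $n^{7/3}$. A secondary technical point is to confirm that the planar congruent-circle incidence bound transfers to circles of fixed spherical radius on $S^2$; this follows because its proof uses only the two combinatorial incidence axioms noted above.
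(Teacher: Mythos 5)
The paper itself does not prove this statement: it is quoted as a classical result (Theorem \ref{AS3}) with a citation to Apfelbaum and Sharir \cite{ApfelbaumSharir}, so there is no in-paper proof to compare against, and your proposal has to stand on its own. Its skeleton --- fix the apex $y$, project radially to $S^2$, turn the angle condition into a repeated spherical distance, i.e.\ incidences with congruent circles --- is the natural and standard reduction. The gap is precisely the step you flag at the end and then wave off. Your per-apex bound of $\lesssim n^{4/3}$ is false once multiplicities are present: put $n/2$ points on each of two rays emanating from $y$ at angle $\alpha$ (this is exactly the configuration of Proposition \ref{middlePinned} of this paper), and the single apex $y$ contributes $\approx n^2$ triples even though the total weight at $y$ is only $n-1$. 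Hence ``bound each apex by $n^{4/3}$ and sum over the $n$ apexes'' cannot be repaired locally; the theorem is saved only by the global fact that few apexes can be that rich, which a per-apex argument never sees.

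Neither of your two proposed fixes closes this. First, there is no ``weighted form'' of the crossing-number/congruent-circle argument with the bound you need: the two-ray example is a legitimate weighted point--circle system of total weight $n-1$ whose weighted incidence count is $\approx n^2$, so a weighted $n^{4/3}$ bound is not merely unproved but false. Second, comparing the high-multiplicity contribution to ``an independent count of collinear triples'' cannot work, since collinear triples can number $n^3$ (already in the two-ray example they are $\approx n^3$ while the angle triples are $n^2$), so no such comparison lands below $n^{7/3}$. A correct treatment of the degenerate case needs genuinely more input: for rays containing $\geq n^{1/3}$ points one combines the rich-line bound (Theorem \ref{richLines}) with the geometric observation that, for a fixed axis line $\ell$ and fixed $\alpha$, each point of $E$ lies on at most two of the coaxial congruent cones whose apexes lie on $\ell$; this caps that contribution by $\lesssim n^3/r^2 + n^2\log n \lesssim n^{7/3}$ for $r \geq n^{1/3}$. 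For multiplicities between $2$ and $n^{1/3}$ an apex-by-apex argument is still insufficient (uniform multiplicity $n^{1/3}$ would yield $n\cdot n^{14/9} \gg n^{7/3}$), so one needs a dyadic decomposition in the multiplicity summed globally over all apexes against the count of rich (apex, ray) pairs. Your reduction is the right start, but this degenerate analysis is the actual content of the theorem, and the proposal does not contain a correct argument for it.
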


This estimate is sharp in the case that the angle in question is $\frac{\pi}{2}.$ To convey the four-dimensional bound, we use the function $\beta(n)$ which grows extremely slowly, as it is defined using the inverse Ackermann function. To be completely rigorous, we can write $\beta(n)\lesssim n^\epsilon$ for any $\epsilon >0$, but in practice, it is essentially a constant.

\begin{classictheorem}\label{AS4}
Given a large finite point set $E$ of $n$ points in $\mathbb R^4$, for $\alpha \neq \frac{m\pi}{2}$ for any integer $m$, $$|\Lambda_1(E;\alpha)|\lesssim n^\frac{5}{2}\beta(n).$$
\end{classictheorem}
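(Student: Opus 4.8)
The plan is to fix the apex of the angle and reduce the count to a unit-distance--type problem on a sphere. Write $|\Lambda_1(E;\alpha)| = \sum_{y\in E} N_y$, where $N_y$ counts the ordered pairs $(x,z)\in (E\setminus\{y\})^2$ with $\angle(x,y,z)=\alpha$. For a fixed apex $y$, associate to each point $x\neq y$ its unit direction $u_x = (x-y)/|x-y| \in S^3$. Since $\angle(x,y,z)=\alpha$ is equivalent to $u_x\cdot u_z = \cos\alpha$, and since on $S^3$ the relation $u\cdot w = \cos\alpha$ is the same as $|u-w| = 2\sin(\alpha/2)$, the pairs counted by $N_y$ are exactly the pairs of directions lying at a fixed chordal distance on $S^3$. (Points of $E$ collinear with $y$ share a direction, but a pair on a common ray has angle $0$, not $\alpha$, so these do not contribute; one records the directions with multiplicities summing to at most $n$.)

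Thus $N_y$ is a unit-distance count for $n$ points constrained to the $3$-sphere. The key point is that this constraint defeats the Lenz example: the Lenz configuration requires points spread across orthogonal $2$-flats at norm strictly less than $1$, which cannot lie on a single $S^3$, so the problem behaves three-dimensionally rather than four-dimensionally. Concretely, for each direction $u$ the partners $w$ with $u\cdot w = \cos\alpha$ sweep out the $2$-sphere $C_u = \{w\in S^3 : u\cdot w = \cos\alpha\}$, and all of these $C_u$ are congruent (the small sphere at latitude $\alpha$ has radius $\sin\alpha$). Hence $N_y$ equals the number of incidences between the $n$ direction points and the $n$ congruent $2$-spheres $\{C_u\}$. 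Here the hypothesis $\alpha \neq \tfrac{m\pi}{2}$ is essential: when $\cos\alpha = 0$ the $C_u$ degenerate into great $2$-spheres (equatorial copies of $S^2$), which behave like hyperplanes and can carry many points, so the favorable bound is lost---this is exactly why the $\pi/2$ case is governed by a weaker estimate.

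I would then invoke (or reprove by a standard space-decomposition argument) the incidence bound between $m$ points and congruent $2$-spheres in $\mathbb{R}^3$, transported to $S^3$ via stereographic projection, which in the diagonal case of $n$ points and $n$ spheres yields $N_y \lesssim n^{3/2}\beta(n)$. Summing over the at most $n$ choices of apex $y$ then gives $|\Lambda_1(E;\alpha)| \lesssim n^{5/2}\beta(n)$, as desired. The routine but necessary steps are: verifying that stereographic projection sends the $C_u$ to honest spheres or planes in $\mathbb{R}^3$ and handling the removed projection point, and checking that weighting the projected points by the ray multiplicities does not inflate the incidence count.

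I expect the main obstacle to be the incidence bound itself. Everything upstream is an exact reduction, so the entire difficulty---including the appearance of the inverse-Ackermann factor $\beta(n)$---is concentrated in the point--sphere incidence estimate in three dimensions, which must control degenerate situations such as many points lying on a common $2$-sphere or many $2$-spheres sharing a common circle. Separating these rich spheres and flat degeneracies from the generic part, and applying a cutting scheme whose lower-envelope complexity contributes the $\beta(n)$ factor, is where the real work lies; the $\alpha\neq\pi/2$ hypothesis is precisely what keeps these degeneracies inside the congruent-sphere regime.
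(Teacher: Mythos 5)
The paper you are reviewing never actually proves this statement: it is Theorem~\ref{AS4}, quoted from Apfelbaum and Sharir \cite{ApfelbaumSharir}, so there is no internal proof to compare against, and your proposal has to be judged as a reconstruction of the original argument. Its skeleton is the right one: fix the apex $y$, pass to unit directions on $S^3$, rewrite $\angle(x,y,z)=\alpha$ as $u_x\cdot u_z=\cos\alpha$, view this as incidences with the small spheres $C_u$, and note that $\alpha\neq\pi/2$ is exactly what prevents the degeneracy in which infinitely many of the $C_u$ contain a common (great) circle --- this is where the Lenz-type obstruction lives, and the $\beta(n)$ factor does indeed come from a Clarkson--Edelsbrunner--Guibas--Sharir--Welzl-type sphere incidence bound.

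The genuine gap is the step you explicitly wave off as routine: ``checking that weighting the projected points by the ray multiplicities does not inflate the incidence count.'' It does inflate it, fatally for your summation scheme. Let $y$ be the common endpoint of two rays making angle $\alpha$ with each other, and put $n/2$ points of $E$ on each ray. Every pair $(x,z)$ with $x$ on one ray and $z$ on the other satisfies $\angle(x,y,z)=\alpha$, so $N_y\approx n^2$, whereas your unweighted incidence count sees only two direction points and two spheres. Thus $N_y$ is a \emph{weighted} incidence count, the asserted per-apex bound $N_y\lesssim n^{3/2}\beta(n)$ is false, and --- worse --- any bound that is valid for every individual apex must be at least $n^2$, so summing a uniform per-apex bound over the $n$ apexes can never yield anything better than the trivial $n^3$. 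As written, your argument is only valid for point sets with no three collinear points. The collinearity cannot be disposed of apex by apex; it must be amortized globally, for instance by splitting according to the richness of the two rays carrying $x$ and $z$, using the fact that lines containing at least $r$ points number $\lesssim n^2/r^3+n/r$ (Theorem~\ref{richLines}) and that a given pair of such lines meets in one point, hence can serve as the two loaded rays for only one apex. Carrying out this amortization, alongside the incidence bound for the multiplicity-free part, is the core of a correct proof of the theorem, not a footnote.
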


In the case that the angle under consideration is $\frac{\pi}{2}$, there is a construction that yields $\approx n^3$ triples that determine a right angle. This is not surprising, as the angle $\frac{\pi}{2}$ is related to points with dot product zero, which exhibit some distinct behavior in their own right. Therefore, without special assumptions on either the point set or the angles, the question becomes trivial in higher dimensions. We discuss this in greater detail in Section \ref{4d+chains}.

\section{Main results}

In this note, we extend the aforementioned results to $k$-chains of angles, pinned angles and pinned $k$-chains of angles.

\subsection{Angle chains in $\mathbb{R}^2$}

We first obtain the following upper bounds on angle chains in the plane.

\begin{theorem}\label{twoChains}
Given a large finite point set $E$ of $n$ points in the plane, and a $k$-tuple of angles $(\alpha_1,...,\alpha_k),$
$$|\Lambda_k(E)|\lesssim \left\{\begin{tabular}{c r}$n^{\frac{k-1}{2}+2}\log n,$ & $n$ odd\\ $n^{\frac{k}{2}+2},$ & $n$ even \end{tabular}\right.$$
\end{theorem}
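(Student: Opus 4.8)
The plan is to bound $|\Lambda_k(E)|$ by a determination (pinning) argument: choose the $k+2$ points in a carefully selected order so that, at each step, the point being placed is forced onto an explicitly described low-complexity locus, and multiply the number of choices over all steps. Two elementary geometric facts drive everything. First, for fixed $u,v$ and a fixed angle $\alpha$, the locus $\{w:\angle(u,v,w)=\alpha\}$ is a pair of rays out of $v$, so it meets $E$ in at most $n$ points. Second, for fixed $u,w$ and fixed $\alpha$, the locus $\{v:\angle(u,v,w)=\alpha\}$ is a pair of arcs of two circles through $u$ and $w$, by the inscribed angle theorem. The crucial quantitative input is that a line meets a circle in at most two points, so a ``ray'' locus and a ``circle'' locus intersect in only $O(1)$ points. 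Hence a point pinned simultaneously by one ray constraint and one circle constraint contributes only $O(1)$ choices, whereas a point pinned by a single constraint contributes up to $n$.

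For even $k$ I would first choose the odd-indexed points $x_1,x_3,\dots,x_{k+1}$ together with the even endpoint $x_{k+2}$ freely, accounting for $n^{k/2+2}$ choices, and then place the remaining points in the order $x_k,x_{k-2},\dots,x_2$. When $x_{2j}$ is placed, its odd neighbors $x_{2j-1},x_{2j+1}$ are already fixed, so the constraint $\alpha_{2j-1}$ (whose vertex is $x_{2j}$) pins it to a circle, while the already-placed point $x_{2j+2}$ (or the endpoint $x_{k+2}$ when $j=k/2$) activates the ray constraint $\alpha_{2j}$; the intersection then leaves $O(1)$ choices. This yields $|\Lambda_k(E)|\lesssim n^{k/2+2}$ with no logarithmic loss, matching the even case.

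For odd $k$ this same bookkeeping leaves exactly one extra free point, so pure pinning gives only $n^{(k+5)/2}$; the missing factor $n/\log n$ is recovered by a single use of Theorem~\ref{PS}, which is the source of the $\log n$. I would peel off the last point and write $|\Lambda_k(E)|=\sum s(x_k,x_{k+1})$ over all $(k-1)$-chains $(x_1,\dots,x_{k+1})$, where $s(p,q)=\#\{x:\angle(p,q,x)=\alpha_k\}$. Grouping by the final edge gives $|\Lambda_k(E)|=\sum_{p,q}W(p,q)\,s(p,q)$, with $W(p,q)$ the number of $(k-1)$-chains whose last edge is $(p,q)$. Since $\sum_{p,q}s(p,q)=|\Lambda_1(E;\alpha_k)|\lesssim n^2\log n$ by Theorem~\ref{PS}, it suffices to prove the pinned bound $W(p,q)\lesssim n^{(k-1)/2}$ uniformly in $(p,q)$. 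This pinned estimate is itself proved by the determination method, and freezing the edge is exactly what pays off: with $x_k=p$ and $x_{k+1}=q$ fixed, I would choose $x_1,x_3,\dots,x_{k-2}$ freely ($n^{(k-1)/2}$ choices) and then pin $x_{k-1},x_{k-3},\dots,x_2$ as before, the frozen edge $(p,q)$ now supplying the ray constraint that anchors the first pinned point $x_{k-1}$. Multiplying through gives $|\Lambda_k(E)|\lesssim n^{(k-1)/2+2}\log n$.

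The main obstacle I anticipate is not the counting but the degenerate configurations that can destroy the $O(1)$ intersection bound: coincident anchor points, collinear triples for which a ``circle'' locus degenerates to a line, or a ray that overlaps an entire arc of the locus. These must be excluded using the distinctness of the points within a chain and the standing assumption that the angles are not integer multiples of $\pi$, with the genuinely degenerate chains counted separately and absorbed as a lower-order term; verifying that this exceptional set is indeed negligible is the most delicate part of the argument.
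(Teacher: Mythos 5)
Your proposal is correct, and it rests on the same two ingredients as the paper's proof: a single application of Pach--Sharir (Theorem \ref{PS}) to absorb the $\log n$, and the observation that a point constrained simultaneously by a ray locus (first two points of an angle fixed) and a circle locus (outer two points fixed, vertex free) has only $O(1)$ admissible positions. Your even case is the paper's argument run in a different order: the paper picks $x_1,x_2$ and then each subsequent even-indexed point freely, pinning each intermediate odd-indexed point as a ray--circle intersection, whereas you pick all odd-indexed points plus $x_{k+2}$ up front and pin the even-indexed points backwards; the bookkeeping ($k/2+2$ free points, $O(1)$ choices for the rest) is identical. Your odd case, however, is organized genuinely differently: the paper applies Theorem \ref{PS} to the \emph{first} triple and extends forward greedily, while you apply it to the \emph{last} angle via $\sum_{p,q}s(p,q)=|\Lambda_1(E;\alpha_k)|\lesssim n^2\log n$ and separately establish the uniform pinned-edge bound $W(p,q)\lesssim n^{(k-1)/2}$, combining the two by a max-times-sum estimate. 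This is a mirror image of the same count, but it buys something the paper's direct sequential count does not: the intermediate statement $W(p,q)\lesssim n^{(k-1)/2}$ is precisely the kind of pinned estimate the paper develops separately later (compare Theorem \ref{th:2dchainpinupper}), so your route produces the unpinned bound and essentially a pinned bound in one pass. Finally, your anticipated degeneracy problems are lighter than you fear: a straight ray can never coincide with a circular arc along a subarc, and if the two anchor points of a circle constraint happen to coincide then the locus is empty because the prescribed angles lie in $(0,\pi)$; so no exceptional-set analysis is needed, only the same constant-factor slack that the paper silently uses when it calls the ray--circle intersection ``unique'' where it is really $O(1)$.
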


This result is sharp up to logarithms, as the following family of lower bounds will show.

\begin{theorem}\label{railroad}
Given $(\alpha_1, \dots, \alpha_k),$ there exists a set of $n$ points in the plane that has $\gtrsim n^{\left\lfloor\frac{k}{2}\right\rfloor+2}$ angle $k$-chains of type $(\alpha_1, \dots, \alpha_k).$
\end{theorem}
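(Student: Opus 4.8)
The plan is to build $E$ from a short sequence of ``rich'' lines joined by families of parallel ``ties,'' arranged like the rails and ties of a railroad, so that every angle occurring in a chain is forced to its prescribed value purely by the directions of the lines, independent of how the intermediate points slide along them. The point of this design is to make each angle constraint \emph{automatic}: rather than pinning a point to a sparse locus, each constraint will be satisfied for an entire line's worth of positions at once, which is what lets roughly half the chain's vertices slide freely.

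Concretely, I will choose lines $L_1,L_2,\dots$ together with ``tie directions'' $d_1,d_2,\dots$ by the following recursion. Fix an arbitrary direction for $L_1$. Having fixed $L_i$, choose $d_i$ so that the angle between $L_i$ and $d_i$ equals $\alpha_{2i-1}$, and then choose the direction of $L_{i+1}$ so that the angle between $d_i$ reversed and $L_{i+1}$ equals $\alpha_{2i}$. Since each $\alpha_j\in(0,\pi)$ is not a multiple of $\pi$, none of these steps forces $d_i$ parallel to $L_i$ or to $L_{i+1}$, so the recursion never degenerates. On $L_1$ I place $N$ points in arithmetic progression, and I define the points of $L_{i+1}$ to be the images of the points of $L_i$ under the shear that projects $L_i$ onto $L_{i+1}$ along direction $d_i$ (these are the ties). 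Thus every line carries exactly $N$ points and $n\approx rN$, where $r$ is the (constant) number of lines used.

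A chain is then read off by placing $x_{2i-1},x_{2i}$ on $L_i$ and taking $x_{2i+1}$ to be the $d_i$-tie of $x_{2i}$, which lies on $L_{i+1}$ by construction. The angle $\angle(x_{2i-1},x_{2i},x_{2i+1})$ is then the angle between $L_i$ (the direction of $x_{2i}\to x_{2i-1}$) and $d_i$, hence equals $\alpha_{2i-1}$ regardless of where $x_{2i-1}$ sits on $L_i$; likewise $\angle(x_{2i},x_{2i+1},x_{2i+2})$ is the fixed angle between $d_i$ and $L_{i+1}$, equal to $\alpha_{2i}$ for every position of $x_{2i+2}$ on $L_{i+1}$. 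Every odd-indexed point $x_3,x_5,\dots$ is determined as a tie, while $x_1$ together with all even-indexed points $x_2,x_4,\dots$ slide freely over $\approx N$ choices on their lines; a short count shows there are exactly $\lfloor k/2\rfloor+2$ free points, giving $\gtrsim N^{\lfloor k/2\rfloor+2}\approx n^{\lfloor k/2\rfloor+2}$ chains. For odd $k$ one extra line and one extra tie absorb the final angle with the new terminal point $x_{2m+3}$ determined, so the free count, and hence the exponent, is unchanged.

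The main thing to verify carefully is that the non-degeneracy and distinctness requirements cost only constant factors. The shears are genuine bijections between consecutive lines precisely because $\alpha_j$ is never a multiple of $\pi$; the orientation (``correct ray'') conditions that distinguish the angle $\alpha_j$ from $\pi-\alpha_j$ discard only an endpoint or two on each line; and any two distinct lines meet in at most one point, so the tuples with a repeated coordinate form a negligible set. After deleting these, $\gtrsim N^{\lfloor k/2\rfloor+2}$ genuine chains on distinct points survive. I expect the bookkeeping of free versus determined vertices across the two parities of $k$ to be the only delicate step, with everything else reducing to the elementary geometry of shears between lines.
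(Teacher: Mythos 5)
Your construction is correct and is essentially the paper's own proof: the paper likewise places the points on a constant number of lines (there, all passing through the origin, with $\ell_{i+1}$ meeting $\ell_i$ at angle $\alpha_{2i-1}-\alpha_{2i}$) joined by parallel ``ties,'' so that $x_1$ and the even-indexed points slide freely over $\approx n$ positions each while the odd-indexed points are determined as ties, giving $\gtrsim n^{\lfloor k/2\rfloor+2}$ chains. One small correction to your bookkeeping: the orientation conditions distinguishing $\alpha_j$ from $\pi-\alpha_j$ do not merely discard ``an endpoint or two'' per line --- they restrict each free point to a ray emanating from the previously determined point, and the resulting nested side-constraints cut the count of admissible tuples by a multiplicative constant depending only on $k$ (e.g.\ at worst $\approx 1/(\lfloor k/2\rfloor+2)!$), which is still a constant-factor loss and is treated with the same informality in the paper's proof.
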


These theorems are proved in Section \ref{2dchains}.

\subsection{Angle chains in $\mathbb{R}^3$}%wrong label, yeah? \label{3dchains}

In contrast to the planar case, a wide open problem reveals itself for 2-chains of right angles. We currently have no nontrivial bounds for chains of non-right angles in three dimensions. In general we are only able to obtain the trivial upper bound of $\lesssim n^{\frac{10}{3}}$ obtained by observing there are $\lesssim n^{\frac{7}{3}}$ choices for the first three points using the result of Apfelbaum and Sharir in Theorem \ref{AS3}, and then $n$ choices for the fourth and final point. In Subsection \ref{2chainsin3d} we obtain many improvements on this result under further conditions on our point set. Collectively, these partial results point towards the following conjecture.

\begin{conjecture}\label{2chainsConj}
Given a large finite point set $E \subset \mathbb R^3$ of $n$ points we have that for right angles,
$$|\Lambda_2(E)| \lesssim n^{3}.$$
\end{conjecture}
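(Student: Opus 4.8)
The plan is to parametrize each right-angle $2$-chain by its middle edge $x_2x_3$ and to reduce the count to a second-moment estimate for a structured family of perpendicular planes. Writing $v=x_3-x_2$, the condition $\angle(x_1,x_2,x_3)=\frac{\pi}{2}$ says exactly that $x_1-x_2\perp v$, so $x_1$ ranges over $E\cap P$, where $P=P(x_2,x_3)$ is the plane through $x_2$ normal to $v$; likewise $\angle(x_2,x_3,x_4)=\frac{\pi}{2}$ forces $x_4$ into $E\cap Q$, where $Q=Q(x_2,x_3)$ is the plane through $x_3$ normal to the same direction $v$, so that $P$ and $Q$ are parallel. Setting $f(x_2,x_3)=|E\cap P|$ and $g(x_2,x_3)=|E\cap Q|$, we obtain
$$|\Lambda_2(E)|=\sum_{\substack{x_2,x_3\in E\\ x_2\neq x_3}} f(x_2,x_3)\,g(x_2,x_3),$$
up to lower-order terms coming from the distinctness requirement on the four points.

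The first step is to record the marginal bound furnished by Theorem \ref{AS3}. Summing $f$ over all middle edges simply recounts ordered right-angle triples, so $\sum_{x_2,x_3} f(x_2,x_3)=|\Lambda_1(E;\frac{\pi}{2})|\lesssim n^{7/3}$, and the same holds for $g$. Applying Cauchy--Schwarz to the display above, the problem reduces to the second-moment estimate $\sum_{x_2,x_3} f(x_2,x_3)^2\lesssim n^3$ together with its symmetric analogue for $g$. Expanding the square, this quantity counts $4$-tuples $(x_1,x_1',x_2,x_3)$ for which $x_3-x_2$ is orthogonal to both $x_1-x_2$ and $x_1'-x_2$; when $x_1-x_2$ and $x_1'-x_2$ are independent this pins $x_3$ to a single line through $x_2$, whereas when they are parallel $x_3$ may range over an entire plane through $x_2$ normal to $\mathrm{span}(x_1-x_2)$.

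I expect the main obstacle to be precisely this second-moment bound, which is an incidence problem for the perpendicular planes $P(x_2,x_3)$ and is where the conjecture resists a general proof. The crude estimates $f\leq n$ and $\sum f\lesssim n^{7/3}$ only yield $\sum f^2\lesssim n^{10/3}$, reproducing the trivial bound, so any improvement must suppress the contribution of rich planes and rich lines: a plane carrying $\approx m$ points of $E$, together with a line of $\approx t$ points normal to it, already contributes $\approx t\,m^2$ such $4$-tuples, which is of order $n^3$ at the extreme. This shows the estimate is tight and that the real difficulty is to prevent many such dense configurations from coexisting; these are exactly the Lenz-type degeneracies, the same phenomenon that renders right angles special in higher dimensions. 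Under hypotheses that cap the number of points on any line or plane, or more generally a suitable non-degeneracy assumption on $E$, these contributions are controlled and the Cauchy--Schwarz scheme closes to give $|\Lambda_2(E)|\lesssim n^3$; removing such assumptions is the crux that keeps the statement conjectural. An alternative route replaces the planes by the Thales spheres characterizing right angles and seeks a point--sphere incidence bound, but it runs into the same degenerate configurations.
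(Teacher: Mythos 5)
The statement you were given is Conjecture \ref{2chainsConj}; the paper does not prove it, and in fact explicitly records (at the start of Section \ref{3dchains}) that the authors could not beat the trivial bound $|\Lambda_2(E)|\lesssim n^{10/3}$ for general $E\subset\mathbb{R}^3$, offering only conditional partial results in Subsection \ref{2chainsin3d}. Your proposal, to its credit, does not claim a proof either: it reduces the conjecture, via Cauchy--Schwarz, to the second-moment estimate $\sum_{x_2,x_3} f(x_2,x_3)^2\lesssim n^3$, and correctly identifies the extremal Lenz-type configurations (a rich plane orthogonal to a rich line). The reduction itself is sound as far as it goes: the identity $|\Lambda_2(E)|=\sum fg$ up to degenerate tuples, the marginal bound $\sum f=|\Lambda_1(E;\tfrac{\pi}{2})|\lesssim n^{7/3}$ from Theorem \ref{AS3}, and the expansion of $\sum f^2$ as a count of ``hinge'' configurations $(x_1,x_1',x_2,x_3)$ are all correct. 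But the gap is exactly where you say it is: the bound $\sum f^2\lesssim n^3$ is itself an open point--plane incidence problem that nothing in the paper or the cited literature supplies, and your own tightness example shows there is no room to spare. So what you have is an honest reformulation of the open problem, not a proof --- which is the correct state of affairs, since the statement is open.

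Two further points of comparison with the paper's partial results are worth recording. First, the paper's unconditional-looking partial result (the first theorem of Subsection \ref{2chainsin3d}) is in effect the cruder estimate $\sum fg\le(\max g)(\sum f)\lesssim p\,n^{7/3}$ under the hypothesis that every plane contains at most $p$ points of $E$, while Theorem \ref{th:parallel} and the rich-line theorems (via Theorem \ref{richLines}) handle other structured subcases; your Cauchy--Schwarz scheme refines the first of these by replacing $\max g$ with an $L^2$ average, and it isolates a clean intermediate statement that would imply the conjecture. Second, a caveat: the quantity $\sum f^2$ counts two right angles at the \emph{same} vertex $x_2$ sharing the edge $x_2x_3$, which is not a $2$-chain, so your target estimate is not known to be implied by the conjecture --- Cauchy--Schwarz may be lossy here, and you may be aiming at a strictly harder statement. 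Moreover, your degenerate case (parallel $x_1-x_2$ and $x_1'-x_2$) already contains the entire difficulty, since it asks how often a rich plane can sit orthogonally to a rich line inside $E$, which is precisely the point--plane incidence phenomenon the authors say they were unable to control.
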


Note that this conjecture matches the lower bound that we get by embedding the appropriate 2-chain construction from Theorem \ref{railroad} in $\mathbb R^3.$ In Subsection \ref{3+chainsin3d} we obtain non-trivial results for right angle $k$-chains with $k\geq 3$ and develop an induction mechanism that generates bounds for right angle chains of arbitrary length. If the conjecture above were to be confirmed, or even if progress is made toward it in the general case, it would immediately lead to improved bounds on some cases of longer right angle chains. We now summarize our results for point sets in $\mathbb R^3,$ which build on Theorem \ref{AS3}.

\begin{theorem}\label{threeChains}
Given a large finite point set $E \subset \mathbb R^3$ of $n$ points, we have that for right angles,
$$|\Lambda_1(E)| \lesssim n^\frac{7}{3}, |\Lambda_2(E)| \lesssim n^\frac{10}{3}, |\Lambda_3(E)| \lesssim n^4, |\Lambda_4(E)| \lesssim n^\frac{13}{3}, |\Lambda_5(E)| \lesssim n^5,$$
$$\text{ and }|\Lambda_k(E)| \lesssim n^{\frac{1}{3}\left(19+\left\lfloor\frac{7(k-7)}{4}\right\rfloor\right)},\text{ for }k\geq 6.$$
\end{theorem}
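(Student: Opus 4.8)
The plan is to combine the Apfelbaum--Sharir right-angle bound (Theorem \ref{AS3}) with a Cauchy--Schwarz reduction for the short chains, and then to bootstrap to all $k$ by an appending/gluing induction. Throughout, the governing geometric fact is that a right angle $\angle(x_{j},x_{j+1},x_{j+2})=\frac{\pi}{2}$ forces $x_{j+2}$ to lie on the plane through $x_{j+1}$ orthogonal to $x_{j+1}-x_{j}$; equivalently, right-angle $1$-chains are the edges of a directed graph on ordered pairs of $E$, and a $k$-chain is a walk of length $k$ in that graph. The cases $k=1$ and $k=2$ are immediate: Theorem \ref{AS3} gives $|\Lambda_1(E)|\lesssim n^{7/3}$, and appending one point, which is constrained to a plane and hence is one of at most $n$ points, gives the trivial $|\Lambda_2(E)|\lesssim n^{10/3}$.

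The crux is the bound $|\Lambda_3(E)|\lesssim n^4$. First I would parametrize a $3$-chain $(x_1,\dots,x_5)$ by its middle right-angle triple $(x_2,x_3,x_4)\in\Lambda_1(E)$, noting that $x_1$ ranges over the points on the plane through $x_2$ orthogonal to $x_3-x_2$, and $x_5$ over the points on the plane through $x_4$ orthogonal to $x_3-x_4$. Writing $L(x_2,x_3)$ and $R(x_3,x_4)$ for these two point counts, this expresses $|\Lambda_3(E)|$ as a sum of $L\cdot R$ over middle triples. Applying Cauchy--Schwarz, and using the symmetry between the two ends, reduces the problem to estimating $\sum_{(a,b)\in E^2} L(a,b)^2$, the number of $4$-tuples $(x_1,x_1',a,b)\in E^4$ with $x_1-a$ and $x_1'-a$ both orthogonal to $b-a$. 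The plan is to show this quantity is $\lesssim n^3$: for fixed $x_1,x_1',a$ the point $b$ is confined to the line through $a$ orthogonal to $\mathrm{span}(x_1-a,\,x_1'-a)$, and a perpendicularity/point--line incidence estimate in $\mathbb{R}^3$ controls how often these lines are rich. Combining $\sum_{(a,b)} L(a,b)^2\lesssim n^3$ with the trivial factor $\le n$ at the opposite end yields $|\Lambda_3(E)|\lesssim n^4$. The cases $k=4,5$ are then handled by variants of the same template, pinning one or two of the outer points and invoking the corresponding pinned right-angle counts, to obtain $|\Lambda_4(E)|\lesssim n^{13/3}$ and $|\Lambda_5(E)|\lesssim n^5$.

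For $k\ge 6$ I would set up the induction mechanism by gluing. Splitting a $k$-chain at a shared pair of consecutive vertices realizes it as an $a$-chain concatenated with a $b$-chain, $a+b=k$, that agree on two consecutive vertices, so that
\[
|\Lambda_{k}(E)|\ \le\ |\Lambda_{a}(E)|\cdot \max_{(p,q)\in E^2}\ \#\{\,b\text{-chains with }(x_1,x_2)=(p,q)\,\}.
\]
The inner maximum is a pinned chain count, and the plan is to show that each additional block of four links is absorbed at the Apfelbaum--Sharir cost $n^{7/3}$, that is $|\Lambda_{k+4}(E)|\lesssim n^{7/3}\,|\Lambda_k(E)|$ for $k\ge 2$. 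Iterating this from the four base cases $|\Lambda_2(E)|,\dots,|\Lambda_5(E)|$ computed above, and carrying out the integer bookkeeping for how the links distribute across blocks --- which is exactly what produces the floor $\lfloor 7(k-7)/4\rfloor$ --- yields the stated closed form.

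I expect the auxiliary incidence estimate $\sum_{(a,b)} L(a,b)^2\lesssim n^3$, equivalently the control of how many points can simultaneously pile up on the various perpendicular planes, to be the main obstacle: it is precisely the step that goes beyond Theorem \ref{AS3}, and it is the reason the non-right-angle case, where no such clean perpendicular-plane structure is available, remains out of reach. I would also note that the per-block cost $n^{7/3}$ is dictated by the pinned $2$-chain count currently available; any progress toward Conjecture \ref{2chainsConj}, namely $|\Lambda_2(E)|\lesssim n^3$, would lower this cost and immediately improve the long-chain exponents.
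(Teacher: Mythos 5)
Your base cases $k=1,2$ coincide with the paper's, but from $k=3$ onward you take a genuinely different route, and it has a fatal gap at exactly the step you flag as ``the main obstacle'': the auxiliary estimate $\sum_{(a,b)\in E^2}L(a,b)^2\lesssim n^3$, where $L(a,b)=\#\{x\in E:(x-a)\cdot(b-a)=0\}$, is \emph{false}. Counterexample: let $A$ be $n/2$ points on the $x$-axis, let $G$ be a $\tfrac{\sqrt n}{2}\times\tfrac{\sqrt n}{2}$ integer grid in the $yz$-plane, and let $\mathcal L$ be the family of lines of the $yz$-plane containing $\approx n^{1/3}$ points of $G$; by the standard sharpness example for Theorem~\ref{szemTrot}, $|\mathcal L|\gtrsim n$. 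For each $\ell\in\mathcal L$ with unit direction $v_\ell$, let $f_\ell\in\ell$ be the foot of the perpendicular from the origin to $\ell$, and put $F=\{f_\ell:\ell\in\mathcal L\}$. Take $E=A\cup G\cup F$, so $|E|\le n/2+n/4+O(n/16)<n$. Now fix $\ell\in\mathcal L$, set $a=f_\ell$, and take any $b\in G\cap\ell$, writing $b-a=\lambda v_\ell$. For every $x=(t,0,0)\in A$ we get
$$(x-a)\cdot(b-a)=\lambda\bigl(t\,e_1\cdot v_\ell-f_\ell\cdot v_\ell\bigr)=0,$$
because $v_\ell$ lies in the $yz$-plane (so $e_1\cdot v_\ell=0$) and $f_\ell\perp v_\ell$ (it is a foot of a perpendicular from the origin). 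Hence $L(a,b)\ge n/2$ for $\gtrsim |\mathcal L|\cdot n^{1/3}\gtrsim n^{4/3}$ distinct pairs $(a,b)$, giving $\sum_{(a,b)}L(a,b)^2\gtrsim n^{4/3}\cdot n^2=n^{10/3}\gg n^3$; moreover the triples $(x_1,x_1',a)$ involved here are non-collinear, so no exclusion of degenerate tuples saves the claim. This is precisely the pencil-of-planes degeneration that makes plane incidences in $\mathbb R^3$ intractable, and it shows your perpendicular lines \emph{can} be simultaneously rich for $\approx n^3$ triples, so the Cauchy--Schwarz reduction cannot be closed as stated.

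Your mechanism for $k\ge 6$ has a second unproven ingredient: the gluing inequality needs a \emph{uniform pinned} bound, namely that the number of right-angle $4$-chains with the first two points fixed is $\lesssim n^{7/3}$ for every pin. You give no argument for this, it does not follow from Theorem~\ref{AS3}, and pinned counts in $\mathbb R^3$ are exactly where things degenerate (cf.\ Theorem~\ref{3pinned}, where even a single pinned right angle attains the trivial bound $n^2$). The paper avoids both problems by elementary geometric case analysis rather than moment estimates: for $k=3,4$ it chooses the \emph{outer} points $x_1,x_2,x_4,x_5(,x_6)$ and notes that $x_3$ lies in the intersection of two planes and the Thales sphere with antipodal points $x_2,x_4$, hence has at most two locations unless the two planes coincide, in which case $\overline{x_1x_2}\parallel\overline{x_4x_5}$ and Theorem~\ref{th:parallel} applies; for $k\ge 5$ it proves the two-case recurrence $|\Lambda_k(E)|\lesssim n|\Lambda_{k-2}(E)|+n^{7/3}|\Lambda_{k-4}(E)|$ (Theorem~\ref{3chainsTech}) according to whether $\overrightarrow{x_2x_3}$ is parallel to $\overrightarrow{x_5x_6}$, and an induction showing the second term dominates for $k\ge 6$ produces the exponent $\frac13\bigl(19+\bigl\lfloor\tfrac{7(k-7)}{4}\bigr\rfloor\bigr)$. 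Your closing remark about Conjecture~\ref{2chainsConj} improving long chains is in the spirit of the paper, but the core of your argument does not go through.
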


Proofs of these results are found in Section \ref{3dchains}.

\subsection{Higher dimensions and pinned variants}
In Section \ref{4d+chains} we recall the Lenz example in full detail, as it is the root from which the other higher dimensional results sprang. We then prove that the angle chain question becomes trivial without significant further restrictions in $\mathbb{R}^d$ with $d\geq 6$. We also give some partial results in five dimensions, where the problem does not appear to be trivial. Of course, if a given construction exists in some dimension $d,$ then it can be embedded into a higher dimensional space.

Section \ref{pinnedSection} contains a number of estimates when we fix or ``pin" one of the points in question. For pinned variants we first observe that different behavior is possible depending on which point is pinned, unlike in the single distance case where the roles are symmetric. While we focus on pinning the first point for most of our estimates, the following construction makes this distinction quite explicit when compared to the other pinned results we consider. This shows that the trivial bound of $n^2$ can be achieved by a straightforward arrangement in the case of angles with a pinned middle point.

\begin{proposition}\label{middlePinned}
In $\mathbb R^d,$ with $d\geq 2$, for any large, finite $n,$ there exists a set of $n$ points with $\approx n^2$ triples of points determining any angle $\alpha$, that share the same middle point.
\end{proposition}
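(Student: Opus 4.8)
The plan is to exploit the fact that, once the middle point is pinned at some point $p$, the angle $\angle(x_1,p,x_3)$ depends only on the directions of the vectors $x_1-p$ and $x_3-p$, not on the distances to $p$. Hence if I force every $x_1$ to lie along one ray emanating from $p$ and every $x_3$ to lie along a second ray making angle $\alpha$ with the first, then every resulting triple automatically realizes the prescribed angle, and I get a full product set of valid configurations essentially for free.

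Concretely, I would place $p$ at the origin and work inside any fixed two-dimensional subspace of $\mathbb{R}^d$, which exists since $d\geq 2$. There I would choose two rays $R_1,R_2$ from $p$ whose subtended angle equals the prescribed $\alpha\in(0,\pi)$, and then distribute the remaining $n-1$ points, placing roughly half at distinct positions along $R_1\setminus\{p\}$ and the other half along $R_2\setminus\{p\}$. For the count, observe that for each point $a$ on $R_1$ and each point $b$ on $R_2$, both ordered triples $(a,p,b)$ and $(b,p,a)$ satisfy the angle condition by construction, and all the points involved are distinct; this yields $\gtrsim (n/2)(n/2)\approx n^2$ triples all sharing the middle point $p$. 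The matching upper bound is trivial: with the middle point fixed there are at most $n^2$ ordered pairs of remaining points, so no configuration can exceed $n^2$ such triples. Thus the construction is tight and the total is $\approx n^2$.

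There is essentially no analytic obstacle here; the construction is elementary, and the only points needing care are bookkeeping. In particular, I would use genuine rays rather than full lines through $p$, so that $x_1-p$ and $x_3-p$ are positive multiples of the chosen direction vectors and the subtended angle is exactly $\alpha$ rather than its supplement $\pi-\alpha$. I would also note that same-ray pairs contribute angle $0\neq\alpha$ and so are harmlessly excluded from the count, and confirm that $p$ together with all of the ray points can be chosen to be genuinely distinct. Each of these is routine once the two-ray configuration is fixed.
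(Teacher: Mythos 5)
Your proposal is correct and is essentially the same construction as the paper's own proof: two rays from a common endpoint subtending angle $\alpha$, with the remaining points split evenly between them, giving $\approx n^2$ triples pinned at the shared middle point. Your additional remarks (using rays rather than lines to avoid the supplement $\pi-\alpha$, and the trivial $n^2$ upper bound) are sound bookkeeping that the paper leaves implicit.
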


In contrast to the previous result, we get much different results by pinning the first point.

\begin{theorem}\label{th:2dpin}
For any $n$ points in $\mathbb{R}^2$, and any fixed angle $0<\alpha<\pi$, there are at most $\lesssim n^{4/3}$ triples of points with angle $\alpha$ starting from the origin, and this is sharp.	
\end{theorem}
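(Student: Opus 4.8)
The plan is to recognize the count as a point–line incidence problem, invoke the Szemer\'edi--Trotter theorem for the upper bound, and then reverse-engineer an extremal Szemer\'edi--Trotter configuration for the matching lower bound.

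\textbf{Upper bound.} Place the pinned point at the origin $O$ and write $E$ for the set of $n$ points. For each $p \in E$, the condition $\angle(O,p,q)=\alpha$ forces $q$ to lie on one of the two lines through $p$ making angle $\alpha$ with the ray $\overrightarrow{pO}$; call these lines $L_p^{+},L_p^{-}$. Thus every valid triple $(O,p,q)$ corresponds to an incidence between the point $q \in E$ and one of these at most $2n$ lines. The one subtlety is that distinct points $p$ may generate the same line, so I must bound the multiplicity. However, for a fixed line $\ell$ with $O \notin \ell$, the angle between $\ell$ and the segment $pO$ depends only on the distance from $p$ to the foot of the perpendicular from $O$ to $\ell$, so the equation demanding that $\ell$ make angle $\alpha$ with $\overrightarrow{pO}$ has at most two solutions $p \in \ell$. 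Hence each line arises as some $L_p^{\pm}$ for at most two points $p$, and the number of triples is at most twice the number of incidences between the $n$ points of $E$ and these at most $2n$ distinct lines. Szemer\'edi--Trotter then bounds this by $\lesssim n^{2/3}(2n)^{2/3}+n \lesssim n^{4/3}$.

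\textbf{Lower bound.} For sharpness I would start from a standard Szemer\'edi--Trotter-extremal configuration of $\approx n$ points $P$ and $\approx n$ lines $\mathcal{L}$ with $\gtrsim n^{4/3}$ incidences, and build a point set realizing it. Fix an origin $O$ in general position, off every line of $\mathcal{L}$. For each $\ell \in \mathcal{L}$, the angle between $\ell$ and the segment $pO$ sweeps all of $(0,\pi/2]$ as $p$ runs along $\ell$, so there is a point $p_\ell \in \ell$ for which $\ell$ makes the prescribed angle with $\overrightarrow{p_\ell O}$, i.e. $\ell = L_{p_\ell}^{\pm}$. Set $E = P \cup \{p_\ell : \ell \in \mathcal{L}\}$, still of size $\approx n$. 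Every incidence $(q,\ell)$ with $q \in P \cap \ell$ then yields a triple $(O,p_\ell,q)$ with $\angle(O,p_\ell,q) \in \{\alpha,\pi-\alpha\}$, the value determined solely by which of the two rays of $\ell$ emanating from $p_\ell$ contains $q$; choosing $p_\ell$ so that $\ell$ meets $\overrightarrow{p_\ell O}$ at angle $\min(\alpha,\pi-\alpha)$ guarantees that one ray realizes exactly $\alpha$. A constant fraction of the $\gtrsim n^{4/3}$ incidences therefore become genuine angle-$\alpha$ triples, matching the upper bound.

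\textbf{Main obstacle.} The upper bound is essentially immediate once the incidence structure is set up, the only care being the bounded multiplicity of repeated lines. The real work is in the lower bound: I must ensure that planting the extremal configuration does not forfeit a constant fraction of incidences through the $\alpha$ versus $\pi-\alpha$ side dichotomy, and that the auxiliary points $p_\ell$ are distinct and introduce no spurious degeneracies (e.g. no $p_\ell$ accidentally lying at an intersection of several lines of $\mathcal{L}$ while satisfying multiple angle conditions). Both should be arrangeable by taking $O$ generically, but verifying these genericity conditions constitutes the technical heart of the construction.
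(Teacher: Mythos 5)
Your proposal is correct and takes essentially the same route as the paper: the upper bound identifies the triples with incidences between the $n$ points and the $\le 2n$ lines through each point making angle $\alpha$ with the ray back to the origin and applies Szemer\'edi--Trotter, while the lower bound plants pivot points on the lines of an extremal Szemer\'edi--Trotter grid configuration. If anything, your handling of line multiplicity in the upper bound (each line can arise from at most two points $p$) is more careful than the paper's, and the paper's lower bound differs from yours only in that it places \emph{two} pivot points on each line so that points on either ray yield the angle $\alpha$, which sidesteps the constant-fraction argument you invoke.
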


For angle chains in $\mathbb{R}^2$ pinned at the endpoint we establish sharp bounds up to logarithmic terms, similar to the unpinned setting.

\begin{theorem}\label{th:2dchainpinupper}
	
	For any $n$ point set in $\mathbb{R}^2$, integer $k\ge 2$ and angles $(\alpha_1,...,\alpha_k)$, the number of $k$-chains of type $(\alpha_1,...,\alpha_k)$ starting from the origin is  	
	$$\lesssim \begin{cases}
	n^{\frac{k-1}{2}+1}\log n,& k\text{ is odd}\\
	n^{\frac{k}{2}+1},& k\text{ is even}
	\end{cases}$$
\end{theorem}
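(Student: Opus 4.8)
The plan is to prove both parities at once by an induction that lengthens the chain two points at a time, reducing everything to the two base cases $k=2$ and $k=3$. First I would record the two elementary planar facts that drive every step. If the vertex $v$ and one arm endpoint $a$ of an angle are fixed, then the set of points $p$ with $\angle(a,v,p)=\alpha$ is contained in the union of two lines through $v$, namely the rays obtained by rotating $v\to a$ by $\pm\alpha$. By the inscribed angle theorem, if instead both endpoints $a,b$ are fixed, then the set of $p$ with $\angle(a,p,b)=\alpha$ is contained in the union of two circular arcs determined by $a,b,\alpha$. Since a line meets a pair of circles in at most four points, any point forced simultaneously onto two lines through a common point and onto such an inscribed-angle arc is pinned to $O(1)$ locations.

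For the inductive step, write $f(k)$ for the number of pinned $k$-chains of the given type and suppose the claimed bound holds for $f(k-2)$. Given a pinned $k$-chain $(O,x_2,\dots,x_{k+2})$, its truncation $(O,x_2,\dots,x_k)$ is a pinned $(k-2)$-chain of type $(\alpha_1,\dots,\alpha_{k-2})$, so there are at most $f(k-2)$ choices for it; fixing one fixes in particular $x_{k-1}$ and $x_k$. I would then choose $x_{k+2}\in E$ freely, costing a factor of $n$. The remaining point $x_{k+1}$ is now over-determined: the relation $\angle(x_{k-1},x_k,x_{k+1})=\alpha_{k-1}$, with vertex $x_k$ and fixed $x_{k-1},x_k$, places it on two lines through $x_k$, while $\angle(x_k,x_{k+1},x_{k+2})=\alpha_k$, with vertex $x_{k+1}$ and both endpoints $x_k,x_{k+2}$ fixed, places it on an inscribed-angle arc. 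By the first paragraph only $O(1)$ such $x_{k+1}$ exist, so $f(k)\lesssim n\,f(k-2)$, which is exactly the ratio required by the target exponents.

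It remains to establish the base cases. For $k=2$ I would sum over the $n^2$ choices of $(x_2,x_4)\in E^2$; for each, $x_3$ lies on two lines through $x_2$ (from $\angle(O,x_2,x_3)=\alpha_1$) and on an arc through $x_2,x_4$ (from $\angle(x_2,x_3,x_4)=\alpha_2$), hence is one of $O(1)$ points, giving $f(2)\lesssim n^2$. For $k=3$ the logarithm enters through Pach--Sharir: by Theorem \ref{PS} there are $\lesssim n^2\log n$ triples $(x_3,x_4,x_5)$ with $\angle(x_3,x_4,x_5)=\alpha_3$. Fixing such a triple leaves only $x_2$ undetermined, and it too is over-determined, lying on two lines through $x_3$ (from $\angle(x_2,x_3,x_4)=\alpha_2$) and on an arc through $O$ and $x_3$ (from $\angle(O,x_2,x_3)=\alpha_1$), hence in $O(1)$ positions. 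This gives $f(3)\lesssim n^2\log n$, and the induction then propagates both statements to all $k\ge 2$.

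The step I expect to require the most care is the order in which points are revealed. Extending a chain by two points naively costs $n^2$, and the whole argument hinges on selecting the far endpoint $x_{k+2}$ \emph{before} its neighbor $x_{k+1}$, so that the inscribed-angle arc from the last constraint can combine with the ray constraint from the previous one to pin $x_{k+1}$ to $O(1)$ positions rather than $n$. Beyond this, one must check that degenerate configurations, such as collinear or coincident points that make an arc or a pair of lines degenerate, contribute negligibly, and must track that the $O(1)$ factor accrued at each of the $\approx k/2$ steps multiplies to a constant depending only on $k$, which is permissible since $k$ is treated as a constant.
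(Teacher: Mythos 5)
Your proof is correct, but it is organized differently from the paper's. The paper handles $k\ge 3$ in a single step: it takes the suffix $(x_3,\dots,x_{k+2})$, which is an \emph{unpinned} $(k-2)$-chain of type $(\alpha_3,\dots,\alpha_k)$, bounds its count by the already-proven unpinned chain theorem (Theorem \ref{twoChains}), and then pins $x_2$ to $O(1)$ locations exactly as you do (two lines through $x_3$ from the $\alpha_2$ condition, intersected with the inscribed-angle arcs through the origin and $x_3$ from the $\alpha_1$ condition); your base case $k=3$ is precisely this argument with Pach--Sharir playing the role of the $1$-chain bound. You instead never invoke the unpinned theorem: you run the two-points-at-a-time induction $f(k)\lesssim n\,f(k-2)$ directly on \emph{pinned} chains, growing at the free end of the chain rather than peeling off the two points adjacent to the pin. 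This is essentially a pinned re-derivation of the mechanism inside the paper's proof of Theorem \ref{twoChains} itself (free far point, over-determined middle point). What the paper's route buys is brevity, since the unpinned bound is available as a black box; what your route buys is self-containedness and a bit more rigor: you state explicitly the line-meets-circle lemma (at most one intersection besides the shared point $x_k$, hence $O(1)$ completions) where the paper asserts "there is only one choice of $x_2$ on each of the two lines" without elaboration, and your recursion transparently produces both parities from the two base cases. Your concerns about ordering and degeneracies are handled correctly: distinctness of the chain points and the standing assumption that no $\alpha_i$ is a multiple of $\pi$ rule out the degenerate loci.
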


\begin{theorem}\label{th:2dchainpinlower}
	For any ordered set of angles $(\alpha_1,...,\alpha_k)$, there is a set of $\approx n$ points in $\mathbb{R}^2$ forming $\gtrsim n^{\lfloor{\frac k2}\rfloor+1}$ instances of $k$-angle chains of type $(\alpha_1,...,\alpha_k)$ starting at the origin.  
\end{theorem}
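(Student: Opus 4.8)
The plan is to derive the pinned lower bound directly from the unpinned construction of Theorem~\ref{railroad} by a pigeonhole argument on the first vertex of a chain. Theorem~\ref{railroad} supplies, for the given angle tuple, a set $E$ of $\approx n$ points with $|\Lambda_k(E)|\gtrsim n^{\lfloor k/2\rfloor+2}$. Every element of $\Lambda_k(E)$ is an ordered tuple $(x_1,\dots,x_{k+2})$ whose first coordinate $x_1$ lies in $E$, and since $|E|\lesssim n$ there are at most $\approx n$ possible values of $x_1$. This is the only structural feature of the railroad construction I will need.

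First I would average over the first coordinate: since the $\gtrsim n^{\lfloor k/2\rfloor+2}$ chains are distributed among at most $\approx n$ choices of $x_1$, some point $q\in E$ occurs as the first point of at least
$$\frac{|\Lambda_k(E)|}{|E|}\gtrsim \frac{n^{\lfloor k/2\rfloor+2}}{n}=n^{\lfloor k/2\rfloor+1}$$
of the chains. Translating the whole configuration so that $q$ is carried to the origin is a rigid motion, so it preserves every angle, the type $(\alpha_1,\dots,\alpha_k)$, and the cardinality of $E$; it turns each of these chains into a $k$-chain of type $(\alpha_1,\dots,\alpha_k)$ whose first point is the origin. This produces a set of $\approx n$ points carrying $\gtrsim n^{\lfloor k/2\rfloor+1}$ chains that start at the origin, which is exactly the claimed bound for both parities of $k$. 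It is reassuring that this matches the upper bound of Theorem~\ref{th:2dchainpinupper} up to the logarithmic factor, so no stronger output is possible and, in particular, the averaging cannot be losing too much.

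The reduction above is essentially immediate once Theorem~\ref{railroad} is in hand, so the real content lives in that earlier construction; the only point to verify here is that the averaging costs exactly one factor of $n$, which is guaranteed by $|E|\lesssim n$ (a smaller set of admissible first points would only enlarge the extremal fibre). If instead one prefers an explicit pinned construction mirroring the unpinned one, I would reproduce the railroad set with the pinned vertex placed at an endpoint and use the fact that, viewing the chain through the relation that ties $x_{j+2}$ to $x_j$ across the intermediate vertex $x_{j+1}$, the angle constraints decouple across the odd- and even-indexed vertices; fixing the first vertex then freezes one of the two interleaved families and leaves $\lfloor k/2\rfloor+1$ independent free parameters, which is the source of the floor function. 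The main obstacle on that explicit route is precisely the one already driving Theorem~\ref{railroad}: one must arrange the free parameters so that all generated vertices reuse a common pool of only $\approx n$ points rather than proliferating, since a naive independent choice of the free parameters makes the terminal vertices alone span far more than $n$ points.
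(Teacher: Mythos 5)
Your proof is correct, but it takes a genuinely different route from the paper's. The paper proves this theorem by repeating the explicit ``railroad'' construction of Theorem~\ref{railroad} nearly verbatim, inserting the stipulation that all chains begin at one fixed point of the configuration; fixing that first point removes one free factor of $n$ from the count and yields the exponent $\lfloor k/2\rfloor+1$ directly. You instead treat Theorem~\ref{railroad} as a black box: averaging over the $\approx n$ possible first coordinates of the $\gtrsim n^{\lfloor k/2\rfloor+2}$ unpinned chains produces a point $q$ that begins $\gtrsim n^{\lfloor k/2\rfloor+1}$ of them, and translating $q$ to the origin (a rigid motion, hence angle- and cardinality-preserving) finishes the proof. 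Your reduction is cleaner and strictly more general: it shows that \emph{any} unpinned lower-bound construction automatically yields a pinned one at the cost of exactly one factor of $n$, with no need to revisit the geometry, and it sidesteps the slight awkwardness in the paper's version (which still speaks of choosing $x_1$ and $x_2$ from $\approx n^2$ pairs even after pinning). What the paper's approach buys in exchange is explicit control over which geometric point is pinned and what the pinned configuration looks like, which can be useful for intuition or for adapting the construction. Your closing paragraph sketching an alternative explicit pinned construction is dispensable; the averaging argument alone is a complete proof of the statement.
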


In $\mathbb{R}^3$ we show that for a single right angle the problem is already trivial without further restrictions. We also show that the pinned problem is trivial for longer chains in $\mathbb R^6.$ The precise statements and proofs are in Section \ref{pinnedSection}.

\section{Angle chains in $\mathbb{R}^2$}\label{2dchains}

We first recall the statement of the Theorem \ref{twoChains}.
\vspace{2mm}

\noindent {\bf Theorem \ref{twoChains}.}
\emph{Given a large finite point set $E$ of $n$ points in the plane,
$$|\Lambda_k(E)|\lesssim \left\{\begin{tabular}{l l}$n^{\frac{k-1}{2}+2}\log n,$ & $k$ odd\\ $n^{\frac{k}{2}+2},$ & $k$ even \end{tabular}\right.$$}

\begin{proof}
Given a large finite point set $E$, and a type of $k$-chain, $(\alpha_1, \alpha_2, \dots, \alpha_k),$ we seek to bound the number of $(k+2)$-tuples of points from $E$, $(x_1,\ldots,x_{k+2})$ with the property that $\angle(x_i, x_{i+1}, x_{i+2})=\alpha_i.$ We handle the cases of $k$ even and $k$ odd separately.

{\bf Even $k$:} Pick $x_1$ and $x_2.$ We have $\approx n^2$ such choices. Now pick $x_4$, and notice that the there is a unique location for $x_3$ so that $\angle(x_1, x_2, x_3) = \alpha_1$ and $\angle(x_2, x_3, x_4)= \alpha_2.$ Continuing inductively, there are $n$ choices for each subsequent even indexed point, $x_{2j}.$ Each such choice, will fix the location of $x_{2j-1}$, as we have already chosen $x_{2j-3}$ and $x_{2j-2}.$ This yields an upper bound of $n^{\frac{k}{2}+2}.$

{\bf Odd $k$:} We use Theorem \ref{PS} to get a bound of $n^2 \log n$ choices for the first triple of points, $(x_1, x_2, x_3)$ so that $\angle(x_1, x_2, x_3)=\alpha_1.$ Then we proceed in a manner similar to the even case. That is, continuing inductively, there are $n$ choices for each subsequent odd indexed point, $x_{2j+1}.$ Each such choice, will fix the location of $x_{2j}$, as we have already chosen $x_{2j-2}$ and $x_{2j-1}.$ This yields an upper bound of $n^{\frac{k-1}{2}+2}\log n.$
\end{proof}

We next recall the statement of the Theorem \ref{railroad}.
\vspace{2mm}

\noindent {\bf Theorem \ref{railroad}.}
\emph{Given $(\alpha_1, \dots, \alpha_k),$ there exists a set of $n$ points in the plane that has $\gtrsim n^{\left\lfloor\frac{k}{2}\right\rfloor+2}$ angle $k$-chains of type $(\alpha_1, \dots, \alpha_k).$}

\begin{proof}
Given $(\alpha_1, \dots, \alpha_k),$ set $m=\lfloor 2n/k \rfloor.$ Arrange $m$ points, $p_1, \dots, p_m,$ in order away from the origin, along the $x$-axis, which we will call $\ell_1$, then draw the line $\ell_2$ so that it intersects the line $\ell_1$ at an angle of $\alpha_1-\alpha_2$ at the origin. Now for each of the points $p_j$ on $\ell_1,$ except for $p_m,$ the one furthest from the intersection of $\ell_1$ and $\ell_2$, put a point, $p_{m+j}$, on $\ell_2$ so that $\angle(p_{j'},p_j, p_{m+j})=\alpha_1$ for any $j'>j.$ Notice that by construction, we will also have that $\angle(p_j, p_{m+j}, p_{m+j''})=\alpha_2$ for any $j''<j.$ Now rotate the whole set so that $\ell_2$ coincides with the $x$-axis, and draw $\ell_3$ so that it intersects $\ell_2$ at the origin in an angle of $\alpha_3-\alpha_4$, and continue. At each stage, choose points so that they don't overlap with other points just in case the set wraps around the origin after a number of rotations. Notice that we can form the desired number of angle $k$-chains by picking $x_1$ and $x_2$ from $\approx n^2$ pairs of points from $\ell_1$, then a fixed point $x_3$, determined by $x_2$, from $\ell_2$, then choosing $x_4$ from $\approx n$ points on $\ell_2$ so that $\angle(x_2,x_3,x_4)=\alpha_2,$ and so on.

\begin{centering}
\includegraphics[scale=.7]{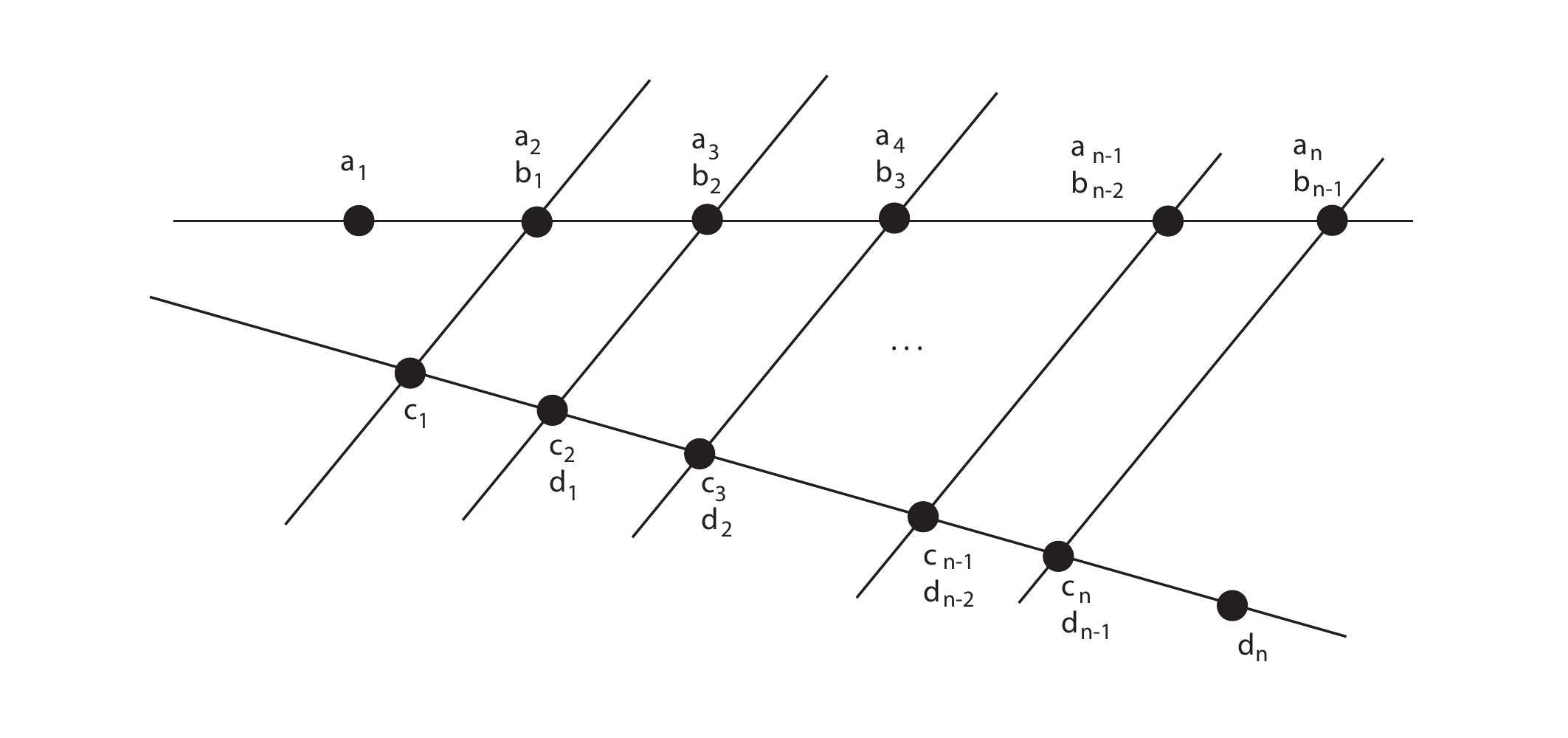}
{\bf Figure 1:} This shows a set of points arranged on two lines that determine $\gtrsim n^3$ instances of 2-chains of the form $(a_i,b_j,c_j,d_k)$ of some non-right angles.
\end{centering}
\end{proof}

\section{Angle chains in $\mathbb{R}^3$}\label{3dchains}
\subsection{Point-line incidences}
As noted in the Introduction, the maximum number of angles determined by a large, finite set of $n$ points in $\mathbb{R}^3$ is $\approx n^{7/3}$. Despite considerable effort, we could not improve the upper bound on $|\Lambda_2(E)|$ for general sets $E\subset \mathbb R^3$ of $n$ points. While there are many results bounding points and various algebraic varieties (Adam Sheffer has an extensive exposition in \cite{Sheffer}), we were unable to control incidences of points and planes sufficiently to beat the trivial estimate in general.

Here we record some partial results toward Conjecture \ref{2chainsConj}. We first state the celebrated Szemer\'edi-Trotter point-line incidence estimate from \cite{ST83}.
\begin{classictheorem}\label{szemTrot}
Given a large finite set of $n$ points and $m$ lines in $\mathbb R^2,$ the number of point-line incidences is bounded above by
$$\lesssim n^\frac{2}{3}m^\frac{2}{3}+n+m.$$
\end{classictheorem}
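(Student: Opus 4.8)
The plan is to derive this incidence bound via Székely's crossing-number method, which reduces the whole estimate to the crossing lemma for graphs drawn in the plane. Recall that the \emph{crossing lemma} asserts that any graph $G$ with $V$ vertices and $E \ge 4V$ edges, drawn in the plane, has crossing number $\mathrm{cr}(G) \gtrsim E^3/V^2$. I would take this as the main black box; it in turn follows from Euler's formula, which yields only the linear bound $\mathrm{cr}(G) \ge E - 3V$, amplified by a random vertex-deletion argument in which each vertex is kept independently with probability $p$ and $p$ is then optimized.

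Given the $n$ points and $m$ lines with $I$ incidences, first I would discard every line containing at most one point, which removes at most $m$ incidences. On each remaining line I order the points along the line and join consecutive points by an edge, so that a line carrying $k_i \ge 2$ points contributes $k_i - 1$ edges. Taking the $n$ points as the vertex set, with $V = n$, and these segments as edges produces a plane-drawn graph with $E = \sum_i (k_i - 1) \ge I - m$ edges. The crucial geometric observation is that two of these edges can cross only if they lie on distinct lines, and two distinct lines meet in at most one point, so the total number of crossings satisfies $\mathrm{cr}(G) \le \binom{m}{2} \le m^2$.

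It then remains to combine these inputs through a dichotomy. If $E < 4V$, that is $I - m < 4n$, we land immediately in the $I \lesssim n + m$ regime. Otherwise $E \ge 4V$, the crossing lemma applies, and we obtain $(I-m)^3/n^2 \lesssim \mathrm{cr}(G) \le m^2$, hence $I - m \lesssim n^{2/3} m^{2/3}$. Adding the two contributions gives $I \lesssim n^{2/3}m^{2/3} + n + m$, as claimed. I expect the main obstacle to be the crossing lemma itself: the elementary Euler-formula bound $\mathrm{cr}(G) \ge E - 3V$ is far too weak to produce the exponent $2/3$, and the decisive gain comes entirely from the probabilistic sparsification step, so the real content lies in verifying that random deletion upgrades the linear dependence to the cubic $E^3/V^2$.
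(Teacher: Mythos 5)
Your proposal is correct: it is the standard Sz\'ekely crossing-number proof of the Szemer\'edi--Trotter theorem. Note, however, that the paper itself offers no proof of this statement at all --- it is quoted as a classical black-box result (cited to Szemer\'edi and Trotter's original paper), so there is nothing internal to compare against; your argument is a genuine self-contained derivation, modulo the crossing lemma. The details check out: discarding lines with at most one point and joining consecutive points along each surviving line gives a graph with $V=n$ and $E=\sum_i(k_i-1)\ge I-m$ (the accounting works because the number of discarded lines plus the number of kept lines is exactly $m$); the graph is simple, since two points are consecutive on at most one line, which is needed for the crossing lemma to apply; and the drawing has at most $\binom{m}{2}$ crossings since segments on a common line are internally disjoint. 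The dichotomy $E<4V$ versus $E\ge 4V$ then yields $I\lesssim n^{2/3}m^{2/3}+n+m$ exactly as you say. One could alternatively prove the theorem by cell decomposition (the original route) or by the polynomial partitioning method, but the crossing-number argument you chose is the shortest known path, with the caveat you correctly identify: all the strength sits inside the probabilistic amplification that turns $\mathrm{cr}(G)\ge E-3V$ into $\mathrm{cr}(G)\gtrsim E^3/V^2$.
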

One consequence of this estimate is the following.
\begin{classictheorem}\label{richLines}
Given a large, finite set of $n$ points in $\mathbb R^d$, with $d\geq 2,$ and a number $r \geq 2,$ the number of lines with at least $r$ points from the set on them is
$$\lesssim \frac{n}{r}+\frac{n^2}{r^3}.$$
\end{classictheorem}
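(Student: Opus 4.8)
The plan is to reduce the statement in $\mathbb{R}^d$ to the planar case $d=2$ and then read off the bound from the Szemer\'edi--Trotter inequality (Theorem \ref{szemTrot}) by a direct incidence count.

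First I would reduce to the plane. Since the point set spans at most $\binom{n}{2}$ lines, there are only finitely many \emph{rich} lines (those containing at least $r$ of the $n$ points) and finitely many pairs of points to worry about. Choosing a generic linear projection $\pi\colon\mathbb{R}^d\to\mathbb{R}^2$, I can arrange simultaneously that $\pi$ is injective on the finite point set, that each rich line maps to an honest line rather than to a point, and that distinct rich lines have distinct images; each of these conditions fails only on a lower-dimensional set of projections, and there are only finitely many constraints to satisfy. A rich line then maps to a planar line carrying at least $r$ of the $n$ (still distinct) projected points, and distinct rich lines produce distinct planar lines, so the number of rich lines in $\mathbb{R}^d$ is at most the number of rich lines for a planar $n$-point configuration. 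It therefore suffices to prove the bound in $\mathbb{R}^2$.

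In the plane, write $m$ for the number of lines incident to at least $r$ points. Counting incidences between the $n$ points and exactly these $m$ lines, each line contributes at least $r$ incidences, so the total incidence count $I$ satisfies $mr\le I$. Applying Theorem \ref{szemTrot} to the same $n$ points and $m$ lines gives $I\lesssim n^{2/3}m^{2/3}+n+m$, and combining the two estimates yields
\[
mr\lesssim n^{2/3}m^{2/3}+n+m .
\]
I would then finish by determining which term on the right dominates. If the first term dominates, then $m^{1/3}r\lesssim n^{2/3}$, which cubes to $m\lesssim n^2/r^3$; if the second dominates, then $mr\lesssim n$, giving $m\lesssim n/r$. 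Adding these two contributions produces the claimed $\tfrac{n}{r}+\tfrac{n^2}{r^3}$.

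The step needing the most care -- and the one I expect to be the main obstacle -- is the third term $m$ on the right-hand side of Szemer\'edi--Trotter, since the inequality $mr\lesssim m$ carries no information about $m$. The fix is to absorb it: for $r$ larger than twice the implicit constant $C$, the term $Cm$ can be moved to the left and subtracted from $mr$ without changing the order of magnitude, so it is harmless; and for $r$ bounded by a constant the target bound is already of order at least $n^2$, which dominates the trivial count $\binom{n}{2}$ of all spanned lines, so the estimate holds automatically. Splitting into these two regimes completes the argument.
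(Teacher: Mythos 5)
Your proposal is correct and follows the same route as the paper: the paper likewise justifies the higher-dimensional statement by a generic projection to a plane followed by the standard rich-lines corollary of the Szemer\'edi--Trotter bound (Theorem~\ref{szemTrot}). You simply supply the details the paper leaves implicit -- the genericity conditions on the projection and the absorption of the $+m$ term via the two regimes $r$ large and $r$ bounded -- and these details are handled correctly.
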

This holds in higher dimensions because with a finite set of points, one can always safely project points to some plane, and apply Theorem \ref{szemTrot} there.
%If a line has at least $r$ points from some previously defined point set, we call it an {\bf $r$-rich} line. So Theorem \ref{richLines} can give us upper bounds on how many lines can be $r$-rich with respect to some set of points. 

\subsection{Counting 2-chains in $\mathbb{R}^3$}\label{2chainsin3d}

\begin{theorem}%old 4.4
	Consider a set $E$ of $n$ points in $\mathbb{R}^3$.  If every plane contains at most $p$ points of $E$, then the number of right angle 2-chains is $\lesssim pn^{7/3}$.
\end{theorem}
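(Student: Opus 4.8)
The plan is to count the 4-tuples $(x_1,x_2,x_3,x_4)$ by fixing the first right-angle triple and then the final point, exactly as in the trivial argument, but to replace the wasteful factor of $n$ for the fourth point by a factor of $p$. First I would invoke Theorem \ref{AS3}: the number of triples $(x_1,x_2,x_3)\in E^3$ with $\angle(x_1,x_2,x_3)=\tfrac{\pi}{2}$ is $\lesssim n^{7/3}$. This takes care of the condition at the vertex $x_2$ and is exactly the input the statement is built to exploit (the theorem ``builds on Theorem \ref{AS3}'').

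The key observation is that the second right-angle condition localizes $x_4$ onto a plane determined solely by $x_2$ and $x_3$. Indeed, $\angle(x_2,x_3,x_4)=\tfrac{\pi}{2}$ is equivalent to
$$(x_2-x_3)\cdot(x_4-x_3)=0,$$
so that $x_4$ must lie on the plane $\Pi$ passing through $x_3$ with normal direction $x_2-x_3$. Since the points are distinct we have $x_2\neq x_3$, so this direction is nonzero and $\Pi$ is a genuine $2$-dimensional plane. By hypothesis $\Pi$ contains at most $p$ points of $E$, hence there are at most $p$ admissible choices of $x_4$ once $x_2$ and $x_3$ are fixed.

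Combining the two steps, every right-angle $2$-chain is obtained by choosing one of the $\lesssim n^{7/3}$ right-angle triples $(x_1,x_2,x_3)$ and then one of the at most $p$ points of $E$ lying on the associated plane $\Pi$. Each $4$-tuple is produced exactly once this way, since $\Pi$ depends only on $x_2,x_3$ and not on the choice of $x_1$, so there is no overcounting. This yields the bound $|\Lambda_2(E)|\lesssim p\,n^{7/3}$.

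I do not expect a serious obstacle here, since the argument is a clean two-step count; the only thing to get right is the conceptual point that the right angle at $x_3$ is precisely what forces $x_4$ onto a plane, which is exactly where the planar-richness hypothesis enters. The one subtlety worth flagging is that the relevant plane is attached to the pair $(x_2,x_3)$ rather than to the full triple, which is what guarantees both a clean multiplicative count and that we never select $x_4$ from more than $p$ points.
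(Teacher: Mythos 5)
Your proof is correct and follows exactly the paper's own argument: apply Theorem \ref{AS3} to bound the right-angle triples $(x_1,x_2,x_3)$ by $\lesssim n^{7/3}$, then note that the condition $\angle(x_2,x_3,x_4)=\tfrac{\pi}{2}$ confines $x_4$ to the plane through $x_3$ with normal $\overrightarrow{x_2x_3}$, which contains at most $p$ points by hypothesis. The additional remarks on nondegeneracy and absence of overcounting are fine but not substantively different from the paper's proof.
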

\begin{proof}
	Consider a right angle 2-chain of points $(x_1,x_2,x_3,x_4)$. By Theorem~\ref{AS3} we can choose $(x_1,x_2,x_3)$ in $n^{7/3}$ ways. Since $\angle (x_2, x_3, x_4)$ is a right angle, $x_4$ can lie in the plane containing $x_3$ with normal vector $\overrightarrow{x_2x_3}$.  Since each plane has at most $p$ points of $E$, the total number of right angle 2-chains is $\lesssim pn^{7/3}$. 
\end{proof}

In \cite{ApfelbaumSharir} they show the construction obtaining the asymptotically tight bound of $\approx n^{7/3}$ is the lattice cube $[1,...,n^{1/3}]^3$.  Since the number of points in any plane is at most $n^{2/3}$, we get the following corollary: 

\begin{corollary}%old 4.5
	The lattice cube $[1,...,n^{1/3}]^3$ has at most $\lesssim n^3$ instances of right angle 2-chains.
\end{corollary}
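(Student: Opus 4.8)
The plan is to apply the theorem proved immediately above with the appropriate choice of the parameter $p$. The lattice cube $[1,\ldots,n^{1/3}]^3$ consists of exactly $n$ points, so that theorem applies, and the only quantity I must pin down is $p$, the maximum number of points of the cube lying in a single plane. Once I show $p \lesssim n^{2/3}$, the theorem yields $\lesssim p\, n^{7/3} \lesssim n^{2/3} n^{7/3} = n^3$, which is exactly the claimed bound.

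To bound $p$, write $m = n^{1/3}$, so that the cube is the set of integer triples $(a,b,c)$ with $1 \le a,b,c \le m$. Consider any plane, given by $\alpha x + \beta y + \gamma z = \delta$ with $(\alpha,\beta,\gamma) \neq (0,0,0)$. At least one of the coefficients is nonzero; say $\gamma \neq 0$ (the remaining cases are identical after permuting the coordinates). Then on this plane the third coordinate is the affine function $z = (\delta - \alpha x - \beta y)/\gamma$, so each of the at most $m^2$ choices of $(x,y) \in \{1,\ldots,m\}^2$ determines $z$ uniquely. Hence the plane contains at most $m^2 = n^{2/3}$ points of the cube, giving $p \le n^{2/3}$.

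Substituting $p \le n^{2/3}$ into the theorem gives $|\Lambda_2(E)| \lesssim n^{2/3}\, n^{7/3} = n^3$, as desired. I expect essentially no obstacle here: the entire geometric content sits in Theorem \ref{AS3} (through the preceding theorem), and the corollary reduces to the elementary lattice-point count above. The one thing worth confirming is that this count cannot be sharpened in a way that would lower the exponent, but it is already tight, since a coordinate face such as $z = 1$ contains exactly $m^2 = n^{2/3}$ points of the cube; thus $p = n^{2/3}$ is of the correct order and $n^3$ is the best bound this method produces, matching Conjecture \ref{2chainsConj}.
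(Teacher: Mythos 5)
Your proposal is correct and follows exactly the paper's route: apply the preceding theorem with $p \lesssim n^{2/3}$, the paper simply asserting that every plane contains at most $n^{2/3}$ points of the cube while you supply the (correct) elementary justification via the affine-function count. No gap; your version is just slightly more detailed than the paper's one-line deduction.
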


\begin{theorem}\label{th:parallel}%old 4.1
	The number of right angle 2-chains where the $\overline{x_1x_2}$ line is parallel to the $\overline{x_3x_4}$ line is $\lesssim n^3.$
\end{theorem}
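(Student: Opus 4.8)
The plan is to convert the two right-angle conditions together with the parallelism hypothesis into a clean geometric description and then count by a double-counting argument over directions, avoiding incidence machinery entirely. Writing $\angle(x_1,x_2,x_3)=\pi/2$ as $\overrightarrow{x_2x_1}\cdot\overrightarrow{x_2x_3}=0$ and similarly at $x_3$, and letting $d$ denote the common direction of the parallel lines $\overline{x_1x_2}$ and $\overline{x_3x_4}$, I would first check that a parallel right-angle $2$-chain is exactly a tuple with $\overrightarrow{x_1x_2}\parallel d$, $\overrightarrow{x_3x_4}\parallel d$, and $\overrightarrow{x_2x_3}\perp d$. Indeed, once $\overrightarrow{x_1x_2}\parallel d$, the right angle at $x_2$ forces $\overrightarrow{x_2x_3}\perp d$, and then the right angle at $x_3$ becomes automatic. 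So it suffices to count such tuples, summed over all directions $d$ realized by a pair of points of $E$.

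For a fixed direction $d$, I would organize $E$ by the height function $h_d(x)=x\cdot d$: the level sets $\{h_d=h\}$ are precisely the planes perpendicular to $d$, and the condition $\overrightarrow{x_2x_3}\perp d$ says exactly that $x_2$ and $x_3$ share a height. Simultaneously, the lines parallel to $d$ partition $E$; write $\deg_d(x)$ for the number of points of $E$ on the $d$-line through $x$, so that $x_1$ (resp.\ $x_4$) may be chosen in $\deg_d(x_2)-1$ (resp.\ $\deg_d(x_3)-1$) ways once $x_2$ (resp.\ $x_3$) is fixed. Setting $\phi_d(h)=\sum_{x:\,h_d(x)=h}(\deg_d(x)-1)$, the number of valid tuples with direction $d$ is at most $\sum_h\phi_d(h)^2$, where relaxing the distinctness requirements (such as $x_2\ne x_3$) only inflates the count.

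The crux is the uniform estimate $\phi_d(h)\le n$ for every $d$ and $h$. The point is that distinct points at a common height lie on distinct $d$-lines, since a line parallel to $d$ meets each perpendicular plane exactly once; hence the terms $\deg_d(x)-1$ contributing to $\phi_d(h)$ come from pairwise-distinct lines, and so $\phi_d(h)\le\sum_{\ell}(|\ell\cap E|-1)\le\sum_{\ell}|\ell\cap E|=n$, where $\ell$ ranges over lines parallel to $d$. Granting this, the one-line estimate $\sum_h\phi_d(h)^2\le\big(\max_h\phi_d(h)\big)\sum_h\phi_d(h)\le n\,P_d$ holds, where $P_d=\sum_h\phi_d(h)$ is the number of ordered pairs of points of $E$ with direction $d$. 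Finally, since every ordered pair of distinct points determines a single direction, $\sum_d P_d=n(n-1)$, and therefore the total is $\le n\sum_d P_d\lesssim n^3$.

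I expect the main obstacle to be organizational rather than technical: the difficulty is to find the framing—grouping first by direction and then by height along that direction—in which the uniform bound $\phi_d(h)\le n$ becomes visible, so that this single bounding step together with the global identity $\sum_d P_d\approx n^2$ closes the argument. Once the reformulation is in place no deep incidence input is needed, and the only care required is to confirm that discarding the diagonal and distinctness constraints costs nothing for an upper bound.
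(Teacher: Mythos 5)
Your proof is correct, but it is organized quite differently from the paper's, which is a three-line direct count: choose $x_1,x_2$ in $\approx n^2$ ways (this fixes the common direction $d$), choose $x_4$ in $\approx n$ ways, and then observe that $x_3$ is uniquely determined, being the intersection of the line through $x_4$ with direction $d$ and the plane through $x_2$ with normal $d$. Your argument instead decomposes over directions $d$ and heights $h$, bounds the weighted count $\phi_d(h)\le n$, and closes with the max-times-sum estimate and the global identity $\sum_d P_d=n(n-1)$. Both proofs ultimately rest on the same geometric fact --- a line parallel to $d$ meets each plane perpendicular to $d$ exactly once --- but you invoke it to show that points at a common height lie on distinct $d$-lines (giving $\phi_d(h)\le n$), whereas the paper invokes it to pin down $x_3$ uniquely after three free choices. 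What the paper's route buys is brevity: no reformulation lemma, no summation bookkeeping, and no need to verify that the right angle at $x_3$ is automatic (for an upper bound, one only needs that valid configurations are constrained, not an exact characterization). What your route buys is a more structural, per-direction accounting: it makes explicit which pairs $(x_3,x_4)$ can be completed and by how many $(x_1,x_2)$, information that could be reused if one wanted refined bounds for sets with few rich directions or wanted to splice this count into a longer-chain recursion. One small remark: your exact characterization (parallelism plus the right angle at $x_2$ forces $\overrightarrow{x_2x_3}\perp d$, and the right angle at $x_3$ is then automatic) is correct and slightly stronger than what either proof needs, since for an upper bound the forward implication suffices.
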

\begin{proof}
	We can choose $x_1, x_2$ in $\approx n^2$ ways.  This also fixes the direction for $\overline{x_3x_4}$.  We can choose $x_4$ in $\approx n$ ways, and then there is only one choice for $x_3.$
\end{proof}

\begin{theorem}%old 4.2
	For a set of $n$ points in $\mathbb{R}^3$, suppose a right angle 2-chain is formed where the $\overline{x_1x_2}$ line has at least $j$ points, the $\overline{x_3x_4}$ line has at least $k$ points, and these lines are not parallel.  The number of ways to choose these 2-chains is $$\lesssim\left(\dfrac{n^2}{j^2}+n\right)\left(\dfrac{n^2}{k^2}+n\right).$$
\end{theorem}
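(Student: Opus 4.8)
The plan is to exploit the rigidity coming from the two right angles together with the non-parallel hypothesis: this will show that a right angle $2$-chain is pinned down by far less information than its four points, after which the remaining data can be counted using Theorem \ref{richLines}.

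First I would record the geometric constraint. Write $\ell_{12}$ and $\ell_{34}$ for the lines through $x_1,x_2$ and through $x_3,x_4$. The condition $\angle(x_1,x_2,x_3)=\pi/2$ says $\overrightarrow{x_2x_3}\perp\overrightarrow{x_2x_1}$, i.e.\ $\overrightarrow{x_2x_3}$ is orthogonal to the direction of $\ell_{12}$; likewise $\angle(x_2,x_3,x_4)=\pi/2$ forces $\overrightarrow{x_2x_3}$ orthogonal to the direction of $\ell_{34}$. Since $\ell_{12}$ and $\ell_{34}$ are not parallel, their two directions span a plane, so the vector $\overrightarrow{x_2x_3}$ must be parallel to the one-dimensional orthogonal complement, namely the cross product of the two directions. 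Consequently, once $\ell_{12}$ and $\ell_{34}$ are fixed, the pair $(x_2,x_3)$ is forced: it is the unique pair of feet of the common perpendicular of the two lines. This is the key structural point.

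The payoff is an injection. The map sending a $2$-chain $(x_1,x_2,x_3,x_4)$ to the pair of incidences $\bigl((x_1,\ell_{12}),(x_4,\ell_{34})\bigr)$ is injective: from $\ell_{12}$ and $\ell_{34}$ one recovers $(x_2,x_3)$ by the previous paragraph, and $x_1,x_4$ are recorded directly, so all four points are determined. Hence the number of $2$-chains in question is at most
$$\Bigl(\sum_{\ell:\,|\ell\cap E|\ge j}|\ell\cap E|\Bigr)\Bigl(\sum_{\ell':\,|\ell'\cap E|\ge k}|\ell'\cap E|\Bigr),$$
where each factor counts incidences between the points of $E$ and the lines rich enough to host the corresponding free endpoint. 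It thus suffices to show that the incidence sum over lines with at least $j$ points is $\lesssim n^2/j^2+n$, and symmetrically for $k$, which multiplies out to the claimed bound.

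I would prove the incidence bound by splitting the rich lines into those with fewer than $\sqrt n$ points and those with at least $\sqrt n$. For the former, a dyadic decomposition by richness $2^i\in[j,\sqrt n)$ together with the estimate $\lesssim n^2/2^{3i}$ from Theorem \ref{richLines} gives $\lesssim n^2/2^{2i}$ incidences per scale, and the resulting geometric series sums to $\lesssim n^2/j^2$. For the latter, Theorem \ref{richLines} yields only $\lesssim\sqrt n$ lines with at least $\sqrt n$ points, and since any two distinct lines share at most one point these lines can carry at most $\lesssim n$ incidences in total. Adding the two ranges gives $\lesssim n^2/j^2+n$. The main obstacle is precisely this last step: the straightforward dyadic bound would lose a logarithmic factor on the high-richness lines, and avoiding it requires the elementary but essential observation that two lines meet at most once. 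Everything else is the short geometric argument forcing the connector $(x_2,x_3)$, which I expect to be routine.
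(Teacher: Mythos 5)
Your proposal is correct and rests on the same two pillars as the paper's proof: the two right angles force $\overrightarrow{x_2x_3}$ to be orthogonal to both line directions, so non-parallelism pins $(x_2,x_3)$ down (the paper states this in one sentence; you supply the common-perpendicular argument), and the remaining count comes from Theorem \ref{richLines}. Where you genuinely differ is in executing the count. The paper multiplies the number of $\ge j$-rich (resp.\ $\ge k$-rich) lines by $j\cdot k$ choices of the free endpoints, which implicitly treats every rich line as having only $\approx j$ (resp.\ $\approx k$) points; the honest quantity is the incidence sum $\sum_{|\ell\cap E|\ge j}|\ell\cap E|$, which is what you bound. Your handling of it is the rigorous version of this step: the dyadic decomposition below richness $\sqrt n$ gives the geometric series summing to $\lesssim n^2/j^2$, and you correctly identify that the naive dyadic sum would lose a $\log n$ on the very rich lines, fixing it with the elementary observation that the $\lesssim\sqrt n$ lines having $\ge\sqrt n$ points can carry only $\lesssim n$ incidences because two lines meet in at most one point (the Cauchy--Schwarz/K\H ov\'ari--S\'os--Tur\'an bound $I\lesssim L\sqrt n+n$). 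So your write-up is, if anything, more complete than the published proof, while arriving at the same bound by the same underlying mechanism.
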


In particular, if $j$ and $k$ are both greater than $n^\frac{1}{4}$ or one of them is greater than $n^\frac{1}{2}$, then this term is $\lesssim n^3$.

\begin{proof}
By Theorem \ref{richLines}, the number of ways to choose a line with at least $i$ points is $\lesssim\left(\frac{n^2}{i^3}+\frac ni\right).$ We can choose the two lines in $\lesssim\left(\frac{n^2}{j^3}+\frac nj\right)\left(\frac{n^2}{k^3}+\frac nk\right)$ ways, plus choosing $x_1$ and $x_2$ in $j\cdot k$ ways.  Since the $\overline{x_1x_2}$ line is not parallel to the $\overline{x_3x_4}$ line, there is at most one choice of $x_2$ and $x_3$ to form a right angle 2-chain.  So the number of right angle 2-chains is $\lesssim\left(\frac{n^2}{j^2}+n\right)\left(\frac{n^2}{k^2}+n\right).$
\end{proof}

\begin{theorem}%old 4.3
	For a set of $n$ points in $\mathbb{R}^3$, suppose a right angle 2-chain is formed where the $\overline{x_2x_3}$ line has at least $m$ points, and the lines $\overline{x_1x_2}$ and $\overline{x_3x_4}$  are not parallel.  The number of ways to choose these 2-chains is $$\lesssim \left(\dfrac{n^4}{m^3}+\dfrac {n^3}m\right).$$
\end{theorem}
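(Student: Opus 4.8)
The plan is to organize the 2-chains by their middle line $\ell=\overline{x_2x_3}$ and to observe that the two right angles rigidly determine $x_2$ and $x_3$ once the outer points $x_1,x_4$ are fixed. First I would apply Theorem~\ref{richLines} with $r=m$ to bound the number of lines containing at least $m$ points of the set by $\lesssim \frac{n^2}{m^3}+\frac nm$. Since the four points of a chain are distinct we have $x_2\neq x_3$, so each 2-chain determines a unique line $\overline{x_2x_3}$; hence summing a per-line count over all such rich lines will charge every chain exactly once and introduce no overcounting.

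The key step is to prove that a single line $\ell=\overline{x_2x_3}$ carries $\lesssim n^2$ admissible chains. Because $x_2,x_3\in\ell$, the vector $\overrightarrow{x_2x_3}$ points along $\ell$, so the right angle at $x_2$, namely $\overrightarrow{x_2x_1}\perp\overrightarrow{x_2x_3}$, is equivalent to $\overrightarrow{x_2x_1}\perp\ell$. This says precisely that $x_2$ is the orthogonal projection of $x_1$ onto $\ell$, so $x_2$ is uniquely determined by $x_1$ (and contributes only when this foot happens to be a point of the set lying on $\ell$). Symmetrically, the right angle at $x_3$ forces $x_3$ to be the orthogonal projection of $x_4$ onto $\ell$, so $x_3$ is determined by $x_4$. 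Therefore the whole chain $(x_1,x_2,x_3,x_4)$ is determined by the outer pair $(x_1,x_4)$, of which there are at most $n^2$.

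Multiplying the two bounds yields $\lesssim\left(\frac{n^2}{m^3}+\frac nm\right)n^2=\frac{n^4}{m^3}+\frac{n^3}{m}$, as desired. I do not expect a genuine obstacle; the one idea that makes everything work is the projection observation, which collapses the naive count of $n^2$ pairs $(x_1,x_2)$ down to the $n$ pairs for which the foot of the perpendicular actually lands on a point of the set, and likewise for $(x_3,x_4)$. The only minor points to check are that $x_1,x_4\notin\ell$ (otherwise $\overrightarrow{x_2x_1}$ would be parallel rather than perpendicular to $\ell$, so such outer points never arise) and that $x_2\neq x_3$ so that $\ell$ is well defined. It is worth noting that the per-line count of $n^2$ never uses the hypothesis that $\overline{x_1x_2}$ and $\overline{x_3x_4}$ are non-parallel; that assumption is included only to mark this estimate as the case complementary to Theorem~\ref{th:parallel}, so that the structural cases assemble into a single global bound.
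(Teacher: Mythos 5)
Your proof is essentially the paper's: bound the number of $m$-rich candidate lines for $\overline{x_2x_3}$ by Theorem~\ref{richLines}, choose the outer pair $(x_1,x_4)$ in at most $n^2$ ways, and observe that $x_2$ and $x_3$ are then uniquely determined, giving $\lesssim\left(\frac{n^2}{m^3}+\frac{n}{m}\right)\cdot n^2$. Your justification of the uniqueness step is in fact cleaner than the paper's: the paper attributes it to the hypothesis that $\overline{x_1x_2}$ and $\overline{x_3x_4}$ are not parallel, while your observation that $x_2$ and $x_3$ must be the feet of the perpendiculars from $x_1$ and $x_4$ onto the middle line works without that hypothesis, exactly as you remark.
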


In particular, if $m\ge n^{1/3}$, then the number of 2-chains is $\lesssim n^3.$

\begin{proof}
	The number of ways to choose a $\overline{x_2x_3}$ line with at least $m$ points is $\lesssim \left(\dfrac{n^2}{m^3}+\dfrac nm\right).$ Having chosen this line, we can choose $x_1$ and $x_4$ in $\approx n^2$ ways. Since the lines $\overline{x_1x_2}$ and $\overline{x_3x_4}$  are not parallel, there are is at most one choice of $x_2$ and $x_3$.  So the number of these 2-chains is $\lesssim \left(\dfrac{n^4}{m^3}+\dfrac {n^3}m\right).$ 
\end{proof}

\subsection{Longer chains in $\mathbb{R}^3$}\label{3+chainsin3d}
\begin{theorem}\label{3chains3}
Given a large finite point set $E$ of $n$ points in $\mathbb R^3$,
$$|\Lambda_3(E)|\lesssim n^4.$$
\end{theorem}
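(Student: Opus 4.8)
The plan is to fix the central vertex $x_3$ of the chain and turn the count into a weighted incidence problem, rather than bounding $|\Lambda_3(E)|$ by $n\,|\Lambda_2(E)|$ (which only gives the trivial $n^{13/3}$ and would require the open Conjecture \ref{2chainsConj}). For $a,b\in E$ write $L(a,b)$ for the number of $c\in E$ with $\angle(c,a,b)=\frac{\pi}{2}$; equivalently $L(a,b)=|E\cap\Pi(a,b)|$, where $\Pi(a,b)$ is the plane through $a$ with normal $\overrightarrow{ab}$. Since a right angle $3$-chain is exactly a triple $(x_2,x_3,x_4)$ with $\angle(x_2,x_3,x_4)=\frac{\pi}{2}$ together with a choice of $x_1\in\Pi(x_2,x_3)$ and $x_5\in\Pi(x_4,x_3)$, we get the exact identity
$$|\Lambda_3(E)|=\sum_{(x_2,x_3,x_4)\in\Lambda_1(E)}L(x_2,x_3)\,L(x_4,x_3).$$
Setting $A(q)=\sum_{a\in E}L(a,q)$, Theorem \ref{AS3} gives $\sum_q A(q)=|\Lambda_1(E)|\lesssim n^{7/3}$, while trivially $A(q)\le n^2$ for every $q$.

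Next I would localize at $q:=x_3$ and let $N(q)$ denote the inner sum. The constraint $\angle(x_2,x_3,x_4)=\frac{\pi}{2}$ says only that $\overrightarrow{qx_2}\perp\overrightarrow{qx_4}$, a relation depending only on the directions of $x_2,x_4$ from $q$, and perpendicularity of directions from a fixed point is, after polarity, a point--line incidence in $\mathbb{RP}^2$ (the direction $u$ is perpendicular to $v$ exactly when $v$ lies on the polar line of $u$). I would decompose the weights $L(\cdot,q)$ into dyadic levels $L\sim\lambda$; the level set $P_\lambda=\{a: L(a,q)\sim\lambda\}$ has $|P_\lambda|\le A(q)/\lambda$ since $\sum_a L(a,q)=A(q)$. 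The number of perpendicular pairs $(x_2,x_4)$ with $x_2\in P_{\lambda_a}$ and $x_4\in P_{\lambda_b}$ is then $\lesssim (|P_{\lambda_a}||P_{\lambda_b}|)^{2/3}+|P_{\lambda_a}|+|P_{\lambda_b}|$ by Theorem \ref{szemTrot}. Weighting by $\lambda_a\lambda_b$ and summing the main term over the dyadic pairs (using $\max_a L(a,q)\le n$) yields, up to lower-order terms, the bound $N(q)\lesssim A(q)^{4/3}n^{2/3}$.

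Finally I would sum over $q$. By Hölder together with the two estimates above,
$$|\Lambda_3(E)|\lesssim n^{2/3}\sum_q A(q)^{4/3}\le n^{2/3}\Big(\max_q A(q)\Big)^{1/3}\sum_q A(q)\lesssim n^{2/3}\cdot n^{2/3}\cdot n^{7/3}=n^{11/3},$$
which sits comfortably inside the claimed $n^4$; the gap between $n^{11/3}$ and $n^4$ is exactly the slack I would spend on the error terms and the degeneracies below.

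The main obstacle is the degenerate part of the incidence step. Many points of $E$ may be collinear with the pin $q$, i.e.\ lie on a single rich line through $q$; such points share one direction, so they collapse to a single point (and a single polar line) in $\mathbb{RP}^2$ while carrying large combined weight, and Szemerédi--Trotter does not detect this multiplicity. This is precisely the phenomenon that blocks the unconditional $2$-chain bound of Conjecture \ref{2chainsConj}. The remedy is to peel off the rich lines through $q$ using Theorem \ref{richLines} and estimate their contribution directly, absorbing it into the room between $n^{11/3}$ and $n^4$. Carrying out this case split uniformly in $q$, and verifying that the rich-line contribution summed over all $q$ stays below $n^4$, is the delicate part of the argument.
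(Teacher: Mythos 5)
Your identity $|\Lambda_3(E)|=\sum_{(x_2,x_3,x_4)\in\Lambda_1(E)}L(x_2,x_3)\,L(x_4,x_3)$, the bookkeeping $\sum_q A(q)=|\Lambda_1(E)|\lesssim n^{7/3}$, and the final H\"older step are all fine, and the route is genuinely different from the paper's. The gap is the incidence step, and it is not a deferrable technicality: Theorem \ref{szemTrot} counts incidences between \emph{distinct} points and lines, while you need to count \emph{pairs of points of $E$}, and all points of $E$ collinear with $q$ collapse to a single projective point under your polarity. Concretely, let $E$ consist of $\approx n/2$ points on each of two perpendicular lines through $q$ (with $O(n)$ auxiliary points added so that every weight $L(\cdot,q)$ is at least $1$). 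Then every cross pair $(x_2,x_4)$ is perpendicular at $q$, so the number of perpendicular pairs with both entries in the level set $P_1$ is $\gtrsim n^2$, while the bound you invoke asserts it is $\lesssim (|P_1||P_1|)^{2/3}+|P_1|\approx n^{4/3}$. So the inequality you apply is simply false in the presence of $q$-collinear points, and everything downstream of it ($N(q)\lesssim A(q)^{4/3}n^{2/3}$, hence $n^{11/3}$) is unproven. Moreover, the remedy you sketch --- peeling off rich lines through $q$ via Theorem \ref{richLines} and ``estimating their contribution directly'' --- does not close as stated: for a rich line $\ell$ the only a priori bounds available are $\sum_{a\in\ell}L(a,q)\leq m_\ell n$ and $\leq A(q)$, and either way the rich contribution summed over $q$ is only bounded by $n^2\cdot n^{7/3}=n^{13/3}$, which is the trivial bound.

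The single observation that rescues your argument (and which you never state) is geometric: for distinct $a,a'$ on the same line $\ell$ through $q$, the planes $\Pi(a,q)$ and $\Pi(a',q)$ are parallel --- both have normal direction $\ell$ --- and distinct, hence disjoint; therefore the total weight of any one direction satisfies $W_\ell:=\sum_{a\in\ell\cap E}L(a,q)\leq n$. With this in hand you need no rich/poor split at all: group \emph{all} points of $E$ by the line through $q$ they determine, so that the inner sum becomes $N(q)=\sum_{\ell\perp\ell'}W_\ell W_{\ell'}$, a weighted incidence count over genuinely distinct projective points; run your dyadic decomposition on the weights $W_\ell$ (using $\sum_\ell W_\ell=A(q)$ and $\max_\ell W_\ell\leq n$), and the same computation now legitimately yields $N(q)\lesssim A(q)^{4/3}n^{2/3}+nA(q)$, hence $|\Lambda_3(E)|\lesssim n^{11/3}$ --- strictly better than the $n^4$ you were asked to prove. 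By contrast, the paper's proof is elementary and runs in the opposite order: choose the four outer points $x_1,x_2,x_4,x_5$ in $n^4$ ways; then $x_3$ lies on the intersection of two planes (the one through $x_2$ with normal $\overrightarrow{x_1x_2}$ and the one through $x_4$ with normal $\overrightarrow{x_4x_5}$) with the sphere having $x_2,x_4$ as antipodal points, giving at most two choices unless the two planes coincide, in which case $(x_1,x_2,x_4,x_5)$ is a parallel-type $2$-chain, Theorem \ref{th:parallel} bounds those by $n^3$, and there are $n$ choices left for $x_3$. As submitted, your proof has a hole at its central step; either fill it with the disjoint-planes observation (and claim the stronger exponent), or fall back on the paper's four-points-first count.
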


\begin{proof}
	First we choose the points $x_1,x_2,x_4,$ and $x_5$.  To form right angles, $x_3$ must lie in the plane containing $x_2$ with normal vector $\overrightarrow{x_1x_2}$, as well as the plane containing $x_4$ with normal vector $\overrightarrow{x_4x_5}$. These two planes must intersect to contain $x_3$. 
	
\indent\textbf{Case 1:} If they are not the same plane, then they intersect on a line. In order for $\angle (x_2, x_3, x_4)$ to be a right angle, $x_3$ must lie on the sphere with antipodal points $x_2$ and $x_4$.  The intersection of the two planes and the sphere is two points, which is the number of choices of $x_3$.  We can choose $x_1,x_2,x_4,$ and $x_5$ in $n^4$ ways, so the total number of choices is $\lesssim n^4$.
	
\indent\textbf{Case 2:} If they are the same plane, observe that $x_1,x_2,x_4,$ and $x_5$ will form a 2-chain, with the $\overline{x_1x_2}$ line parallel to the $\overline{x_4x_5}$ line.  Therefore, $(x_1, x_2, x_4, x_5)$ will form a 2-chain of the type handled by Theorem~\ref{th:parallel}, so there are $\lesssim n^3$ choices of these points. With $n$ choices of $x_3$, this makes a total of $\lesssim n^4$ choices.  
\end{proof}
\begin{theorem}\label{3chains4}
Given a large finite point set $E$ of $n$ points in $\mathbb R^3$,
$$|\Lambda_4(E)|\lesssim n^\frac{13}{3}.$$
\end{theorem}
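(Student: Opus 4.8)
The plan is to mimic the determination argument from the proof of Theorem~\ref{3chains3}, peeling off the last point $x_6$ and exploiting the fact that the middle point $x_3$ is almost always pinned down. Writing a $4$-chain as $(x_1,x_2,x_3,x_4,x_5,x_6)$, I first choose the four points $x_1,x_2,x_4,x_5$ and then count the admissible $x_3$ and $x_6$. The point $x_3$ is subject to three right-angle conditions: it lies in the plane through $x_2$ with normal $\overrightarrow{x_1x_2}$, in the plane through $x_4$ with normal $\overrightarrow{x_4x_5}$, and on the sphere with diameter $x_2x_4$. The point $x_6$ is free in the plane $P(x_4,x_5)$ through $x_5$ with normal $\overrightarrow{x_4x_5}$. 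Exactly as in Theorem~\ref{3chains3}, I split on whether the two planes constraining $x_3$ coincide.

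In the generic case the two planes are distinct, so they meet in a line, and that line meets the sphere in at most two points; hence there are at most two choices of $x_3$. The key observation is that the number of choices for $x_6$, namely $|E\cap P(x_4,x_5)|$, does not depend on $x_1,x_2$, so the count factors:
$$\sum_{x_1,x_2,x_4,x_5} 2\,|E\cap P(x_4,x_5)| = 2\Big(\sum_{x_1,x_2}1\Big)\Big(\sum_{x_4,x_5}|E\cap P(x_4,x_5)|\Big)=2n^2\sum_{x_4,x_5}|E\cap P(x_4,x_5)|.$$
Since $x_6\in P(x_4,x_5)$ is exactly the condition $\angle(x_4,x_5,x_6)=\frac{\pi}{2}$, the inner sum counts right-angle triples, so it equals $|\Lambda_1(E)|\lesssim n^{7/3}$ by Theorem~\ref{AS3}. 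This gives $\lesssim n^2\cdot n^{7/3}=n^{13/3}$ for the generic case.

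The main obstacle is the degenerate case, where the two planes constraining $x_3$ coincide; this forces $\overrightarrow{x_1x_2}\parallel\overrightarrow{x_4x_5}$ and $\overrightarrow{x_2x_4}\perp\overrightarrow{x_1x_2}$, so that $(x_1,x_2,x_4,x_5)$ is a right-angle $2$-chain with parallel first and last edges. By Theorem~\ref{th:parallel} there are only $\lesssim n^3$ of these, but now both remaining points are essentially free: $x_3$ ranges over a circle (the common plane meets the sphere with diameter $x_2x_4$ in a circle) and $x_6$ ranges over $P(x_4,x_5)$, so the naive bound is $\lesssim n^3\cdot n\cdot n=n^5$. The hard part is to save the extra factor of $n^{2/3}$. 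I would handle this by organizing the degenerate configurations around their common direction $d=\overrightarrow{x_4x_5}$, which localizes the admissible points: the conditions on $x_2$ and $x_1$ confine them to lines through fixed points, and a dyadic decomposition over line richness controlled by Theorem~\ref{richLines} (equivalently Szemer\'edi--Trotter, Theorem~\ref{szemTrot}) should absorb the degenerate contribution into $\lesssim n^{13/3}$. Making this incidence bookkeeping yield exactly the $n^{2/3}$ saving, rather than a weaker power, is the crux of the argument.
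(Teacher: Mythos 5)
Your generic case is correct and is essentially the paper's Case 1, just organized as a sum: the factorization $\sum_{x_4,x_5}|E\cap P(x_4,x_5)|=|\Lambda_1(E)|\lesssim n^{7/3}$ together with the $n^2$ choices of $(x_1,x_2)$ and the at most two choices of $x_3$ gives $\lesssim n^{13/3}$, exactly as in the paper.

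The degenerate case, however, contains a genuine gap, and it is precisely the part you flag as ``the crux.'' Your plan --- count the parallel right-angle $2$-chains $(x_1,x_2,x_4,x_5)$ via Theorem~\ref{th:parallel} and then pay a factor of $n$ for each of $x_3$ and $x_6$ --- stalls at $n^5$, and the proposed rescue by dyadic decomposition over line richness is never carried out; it is not clear it can be made to work in that form, since Theorem~\ref{richLines} gives no leverage here (nothing forces the relevant points onto rich lines). The paper avoids this entirely by reordering which points are chosen first: in the degenerate case, choose the right-angle triple $(x_4,x_5,x_6)$ in $\lesssim n^{7/3}$ ways using Theorem~\ref{AS3} (this is where the factor $n^{2/3}$ you are missing gets saved --- $x_6$ is absorbed into the Apfelbaum--Sharir count rather than being a free point), then choose $x_1$ in $n$ ways. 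Now $x_2$ is \emph{determined}, not free: coincidence of the two planes forces $\overrightarrow{x_1x_2}\parallel\overrightarrow{x_4x_5}$, so $x_2$ lies on the line through $x_1$ with direction $\overrightarrow{x_4x_5}$, and $x_2$ must also lie in the common plane; since that line is normal to the plane, they meet in exactly one point. Finally $x_3$ contributes at most $n$ choices, for a total of $n^{7/3}\cdot n\cdot 1\cdot n=n^{13/3}$. In short, the missing idea is not an incidence argument but the observation that in the degenerate case the pair $(x_1,x_2)$ costs only $n$ (one free point plus one determined point), so the expensive $n^{7/3}$ count can be spent on $(x_4,x_5,x_6)$ instead.
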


\begin{proof}
	First we choose the points $x_1,x_2,x_4,x_5,x_6$. To form right angles, $x_3$ must lie in the plane containing $x_2$ with normal vector $\overrightarrow{x_1x_2}$, as well as the plane containing $x_4$ with normal vector $\overrightarrow{x_4x_5}$. These two planes must intersect to contain $x_3$. 
	
\indent\textbf{Case 1:} If they are not the same plane, then they intersect on a line. In order for $\angle (x_2, x_3, x_4)$ to be a right angle, $x_3$ must lie on the sphere with antipodal points $x_2$ and $x_4$.  The intersection of the two planes and the sphere is two points, which is the number of choices of $x_3$.  We can choose $x_4,x_5,$ and $x_6$ in $n^{7/3}$ ways by Theorem~\ref{AS3}, and $x_1$ and $x_2$ in $n^2$ ways. So the total number of choices is $\lesssim n^\frac{13}{3}$.
	
\indent\textbf{Case 2:} If they are the same plane, then we can first choose $x_4,x_5,$ and $x_6$ in $n^{7/3}$ ways by Theorem~\ref{AS3}.  Since the two planes to be equal, $\overrightarrow{x_1x_2}$ is parallel to $\overrightarrow{x_4x_5}$.  So if we choose $x_1$ in $n$ ways, this fixes the line where $x_2$ could lie.  Moreover, $x_2$ must lie in the common plane, so together with the line from $x_1$ there is at most one choice for $x_2$.  We can choose $x_3$ in $n$ ways, so the total number of 4-chains is $\lesssim n^\frac{13}{3}$. 
\end{proof}

In Theorem~\ref{railroad} we have that the number of 4-chains in $\mathbb{R}^2$ is at least $\gtrsim n^4$.  This construction can also be embedded in $\mathbb{R}^3$, so the multiplicative gap between the lower and upper bounds is $n^{1/3}$. For longer chains we have the following recurrence:

\begin{theorem}\label{3chainsTech}
Given a large finite point set $E$ of $n$ points in $\mathbb R^3$, for $k \geq 5,$
$$|\Lambda_k(E)|\lesssim n\cdot |\Lambda_{k-2}(E)|+n^{7/3}|\Lambda_{k-4}(E)|.$$
\end{theorem}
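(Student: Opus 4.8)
The plan is to generalize the mechanism behind Theorems~\ref{3chains3} and~\ref{3chains4}: isolate one interior point of the chain whose position is over-determined by its neighbors, and split according to whether two naturally occurring planes coincide. Concretely I would focus on the point $x_4$. Since $\angle(x_2,x_3,x_4)$, $\angle(x_3,x_4,x_5)$ and $\angle(x_4,x_5,x_6)$ are all right angles, $x_4$ must lie on the plane $A$ through $x_3$ with normal $\overrightarrow{x_3x_2}$, on the plane $B$ through $x_5$ with normal $\overrightarrow{x_5x_6}$, and on the sphere $S$ with antipodal points $x_3$ and $x_5$. The hypothesis $k\ge 5$ guarantees that the pieces surrounding $x_4$ are long enough for the reduction to make sense.

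First I would dispose of the non-degenerate case $A\neq B$. Here $A\cap B$ is a line, and intersecting it with $S$ leaves at most two choices for $x_4$ once $x_2,x_3,x_5,x_6$ are fixed. The saving then comes from counting the remaining points efficiently: the triple $(x_1,x_2,x_3)$ carries the right angle $\angle(x_1,x_2,x_3)$, so by Theorem~\ref{AS3} there are $\lesssim n^{7/3}$ choices for it, while $(x_5,x_6,\dots,x_{k+2})$ forms a right angle $(k-4)$-chain, contributing $\lesssim|\Lambda_{k-4}(E)|$ choices. Since $x_2,x_3$ come from the triple and $x_5,x_6$ from the tail, the point $x_4$ is then determined up to $O(1)$ positions, and every right angle of the original chain is accounted for. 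Multiplying gives the bound $\lesssim n^{7/3}|\Lambda_{k-4}(E)|$ for this case.

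The degenerate case $A=B$ is where the real work lies, and I expect it to be the main obstacle. When the planes coincide their normals are parallel, forcing $\overrightarrow{x_2x_3}\parallel\overrightarrow{x_5x_6}$; simultaneously $x_5\in A$ forces $\angle(x_2,x_3,x_5)=\pi/2$, and $x_4$ is no longer pinned but ranges over the circle $A\cap S$, contributing a factor of $n$. These coincidences are exactly what make $(x_2,x_3,x_5,x_6,\dots,x_{k+2})$ a genuine right angle $(k-2)$-chain, while $(x_2,x_3,x_5,x_6)$ becomes a parallel right angle $2$-chain of the type treated in Theorem~\ref{th:parallel}. The delicate point is that, beyond this underlying $(k-2)$-chain, the auxiliary points $x_1$ and $x_4$ must together cost only a single factor of $n$ rather than $n^2$; the intended resolution is to use the parallel structure (as in Theorem~\ref{th:parallel}) to determine one of them rather than choosing it freely, so that the whole degenerate family is bounded by $\lesssim n|\Lambda_{k-2}(E)|$. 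Making this last reduction rigorous, so that the parallelism genuinely recovers the missing factor of $n$, is the step I expect to be hardest. Once both cases are controlled, adding them yields $|\Lambda_k(E)|\lesssim n|\Lambda_{k-2}(E)|+n^{7/3}|\Lambda_{k-4}(E)|$.
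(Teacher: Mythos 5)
Your non-degenerate case $A\neq B$ is exactly the paper's first case and is fine (modulo the harmless point that two distinct planes can also be parallel and disjoint, in which case there is no admissible $x_4$ at all and the bound only improves). The genuine gap is in the degenerate case $A=B$, precisely where you flagged it, and within your decomposition it cannot be closed. Once you take the spliced tuple $(x_2,x_3,x_5,x_6,\dots,x_{k+2})$ as your $(k-2)$-chain, the two leftover points must both be chosen afterwards, and each genuinely ranges over a positive-dimensional set: $x_1$ over the plane through $x_2$ with normal $\overrightarrow{x_2x_3}$, and $x_4$ over the circle $A\cap S$. Each of those sets can contain $\approx n$ points of $E$, and nothing ties the choice of $x_1$ to the choice of $x_4$: all of the degeneracy (the parallelism $\overrightarrow{x_2x_3}\parallel\overrightarrow{x_5x_6}$ and the two extra right angles $\angle(x_2,x_3,x_5)=\angle(x_3,x_5,x_6)=\pi/2$) has already been absorbed into the statement that the spliced tuple is a chain, so it imposes no further constraint on either leftover point. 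Hence this accounting yields only $\lesssim n^2|\Lambda_{k-2}(E)|$, and the lost factor of $n$ cannot be recovered without changing the decomposition; in particular, invoking the parallel-chain bound of Theorem~\ref{th:parallel} for the spliced chain does not help, since the two free points still cost $n^2$.

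The repair is to choose a different sub-chain in the degenerate case, namely one that \emph{contains} $x_4$: the contiguous tail $(x_3,x_4,x_5,\dots,x_{k+2})$ is automatically a right-angle $(k-2)$-chain, so it costs $|\Lambda_{k-2}(E)|$, and it pays for $x_4$. Then choose $x_1$ freely in $n$ ways, and observe that it is now $x_2$ that is overdetermined: the condition $A=B$ forces $\overrightarrow{x_2x_3}\parallel\overrightarrow{x_5x_6}$, so $x_2$ lies on the line through $x_3$ with direction $\overrightarrow{x_5x_6}$, while the right angle $\angle(x_1,x_2,x_3)=\pi/2$ forces $x_2$ onto the sphere with antipodal points $x_1$ and $x_3$; a line meets a sphere in at most two points, so $x_2$ contributes only $O(1)$ choices. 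This gives $\lesssim n|\Lambda_{k-2}(E)|$ for the degenerate case and completes the proof. This is what the paper does (its case split is on whether $\overrightarrow{x_2x_3}\parallel\overrightarrow{x_5x_6}$, which subsumes your condition $A=B$): the idea you were missing is that in the parallel case the sub-chain should include $x_4$ and exclude $x_2$, letting the parallelism pin $x_2$, rather than trying to pin $x_4$ or $x_1$.
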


\begin{proof}
	We will split it into two types of $k$-chains: either $\overrightarrow{x_2x_3}$ is parallel to $\overrightarrow{x_5x_6}$ or it is not.
	
\indent\textbf{Case 1:} If they are not parallel, then we can choose $x_1,x_2,$ and $x_3$ in $n^{7/3}$ ways, and $x_5,...,x_k,x_{k+1},x_{k+2}$ in $|\Lambda_{k-4}(E)|$ ways. Now, $x_4$ must lie in the plane with normal vector $\overrightarrow{x_2x_3}$ containing $x_3$, as well as the plane with normal vector $\overrightarrow{x_5x_6}$ containing $x_5$.  Since the normal vectors are not parallel, the planes intersect in a line.  Finally, $x_4$ must lie on the sphere with $x_3$ and $x_5$ as antipodal points.  The sphere intersects the line in at most two places, so there are at most two choices for $x_4$. Hence the bound for $|\Lambda_k(E)|$ is $2\cdot n^{7/3}\cdot |\Lambda_{k-4}(E)|$.  

\indent\textbf{Case 2:} If they are parallel, then we can first choose $x_3,...,x_{k},x_{k+1},x_{k+2}$ in $|\Lambda_{k-2}(E)|$ ways and $x_1$ in $n$ ways. The point $x_2$ must lie on the line through $x_3$ and parallel to $\overrightarrow{x_5x_6}$. It must also lie on the sphere with $x_1$ and $x_3$ as antipodal points.  The line intersects the sphere in at most two places, so there are at most two choices for $x_2$. Hence the bound for $|\Lambda_k(E)|$ is $2\cdot n\cdot |\Lambda_{k-2}(E)|$.
\end{proof}

Here we present the upper bounds for some values of $k$.
\begin{center}
	\begin{tabular}{|c||c|c|c|c|c|c|}
		
		\hline $k$ &5  &6  &7  &8&9  \\\hline
%\rule{0pt}{3ex} lower bound of: $n^{\left\lfloor\frac{k}{2}\right\rfloor+2}$& $n^\frac{12}{3}$  &$n^\frac{15}{3}$  &$n^\frac{15}{3}$  &$n^\frac{18}{3}$&$n^\frac{18}{3}$  \\\hline
\rule{0pt}{3ex} upper bound of: $n\cdot |\Lambda_{k-2}(E)|$  &$n^\frac{15}{3}$  &$n^\frac{16}{3}$  &$n^\frac{18}{3}$&$n^\frac{20}{3}$&$n^\frac{22}{3}$  \\\hline
\rule{0pt}{3ex} upper bound of: $n^{7/3}\cdot |\Lambda_{k-4}(E)|$&$n^\frac{14}{3}$  &$n^\frac{17}{3}$&$n^\frac{19}{3}$  &$n^\frac{20}{3}$&$n^\frac{22}{3}$ \\\hline   
		
	\end{tabular}
\end{center}
%\begin{center}
%	\begin{tabular}{|c||c|c|c|c|c|c|}
%		
%		\hline $k$ &5  &6  &7  &8&9  \\\hline
%		lower bound of: $n^{\left\lfloor\frac{k}{2}\right\rfloor+2}$& 12/3  &15/3  &15/3  &18/3&18/3  \\\hline
%		upper bound of: $n\cdot c_{k-2}$  &15/3  &16/3  &18/3&20/3&22/3  \\\hline
%		upper bound of: $n^{7/3}\cdot c_{k-4}$&14/3  &17/3  &19/3  &20/3&22/3 \\\hline   
%		
%	\end{tabular}
%\end{center}

We now prove Theorem \ref{threeChains} as a corollary of several of the results mentioned above.

\begin{proof}
Again, we let $|\Lambda_k(E)|$ be the maximum number of right angle $k$-chains for a set $E$ of $n$ points in $\mathbb{R}^3$. By Theorem \ref{AS3}, we have that $|\Lambda_1(E)|\lesssim n^\frac{7}{3}.$ For the case $k=2$, we again apply Theorem \ref{AS3} to get a bound on the number of triples of points that form a right angle. Then by choosing the fourth point freely, we get that $|\Lambda_2(E)|\lesssim n|\Lambda_1(E)| \lesssim n^\frac{10}{3}.$ Theorem \ref{3chains3} and Theorem \ref{3chains4} give us $|\Lambda_3(E)|\lesssim n^4$ and $|\Lambda_4(E)|\lesssim n^\frac{13}{3},$ respectively. We then appeal to Theorem \ref{3chainsTech} to get that $|\Lambda_5(E)|\lesssim n^5.$ For $6\leq k\leq 9,$ we can use Theorem \ref{3chainsTech} to verify that we have
\begin{equation}\label{IH}
n|\Lambda_{k-2}(E)|\lesssim n^\frac{7}{3}|\Lambda_{k-4}(E)|.
\end{equation}
Now we show that \eqref{IH} holds for larger values of $k$ by induction. To see this, suppose that \eqref{IH} is true for all $6\leq k\leq m,$ for some $m\geq 9.$ Now consider $|\Lambda_{m+1}(E)|$. By Theorem \ref{3chainsTech}, we have that
\begin{align*}
|\Lambda_{m+1}(E)|&\lesssim n|\Lambda_{(m+1)-2}(E)|+n^\frac{7}{3}|\Lambda_{(m+1)-4}(E)|\\
&= n|\Lambda_{m-1}(E)|+n^\frac{7}{3}|\Lambda_{m-3}(E)|.
\end{align*}
Now, because \eqref{IH} holds for all values of $k$ between 6 and $m$, it applies to $|\Lambda_{m-1}(E)|,$ so we get that 
$$\left(n|\Lambda_{m-1}(E)|\right)+n^\frac{7}{3}|\Lambda_{m-3}(E)|\lesssim \left(n^\frac{7}{3}|\Lambda_{m-3}(E)|\right)+n^\frac{7}{3}|\Lambda_{m-3}(E)|,$$ and by induction we have shown that \eqref{IH} holds for $k\geq 6.$

To finish, we just notice that this recurrence implies that, starting from the value of $\frac{19}{3}$ when $k=7,$ every four consecutive exponents will increase with $k$ by $\frac{1}{3}, \frac{2}{3}, \frac{2}{3},$ and $\frac{2}{3},$ before repeating. So we need the exponents to increase by $\frac{7}{3}$ in four discrete steps. One way to express this is setting, for any $k\geq 6,$
$$|\Lambda_k(E)|\lesssim n^{\frac{1}{3}\left(19+\left\lfloor\frac{7(k-7)}{4}\right\rfloor \right)}.$$
\end{proof}
%|\Lambda_k(E)| \lesssim n^{\frac{1}{3}\left(19+\left\lfloor\frac{7(k-7)}{4}\right\rfloor\right)},\text{ for }k\geq 6.

\section{Angle chains in higher dimensions}\label{4d+chains}

%[TYPE THIS IN: Moreover, a Lenz-type construction shows that any angle can be made to occur $\approx n^3$ times in dimension six and higher. To see this, simply arrange roughly a third of the points on each of three orthogonal circles (one in the first two dimensions, one in the third and fourth dimensions, and the last in the fifth and sixth dimensions) centered at the origin, each one of whatever radius needed to get the desired angle. We are unaware of any pinned results for angles in any dimension.]

Many discrete geometry problems become trivial in general when we consider them in higher dimensions. One motivation for much of the work in this section is the classical Lenz example for distances. See \cite{BMP} for more on the subject.
\begin{classictheorem}\label{LenzExample}
For $d\geq 4,$ there exists a set of $n$ points in $\mathbb R^d$ with $\approx n^2$ pairs of points that define the same distance.
\end{classictheorem}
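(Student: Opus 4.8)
The plan is to exhibit the classical Lenz configuration explicitly in $\mathbb{R}^4$ and then transport it to all higher dimensions by a trivial embedding. First I would split the ambient space as an orthogonal direct sum $\mathbb{R}^4 = P_1 \oplus P_2$ of two coordinate $2$-planes, say $P_1 = \{(x_1,x_2,0,0)\}$ and $P_2 = \{(0,0,x_3,x_4)\}$. Inside $P_1$ I would place $\lfloor n/2 \rfloor$ distinct points on a circle $C_1$ of radius $r$ centered at the origin, choosing distinct angular positions, and inside $P_2$ I would place the remaining points on a circle $C_2$ of the same radius $r$, again centered at the origin.

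The heart of the argument is a one-line distance computation. For any $p \in C_1$ and any $q \in C_2$, the vectors $p$ and $q$ lie in orthogonal subspaces, so the cross-term in the expansion of $|p-q|^2$ vanishes and the Pythagorean theorem yields
$$|p-q|^2 = |p|^2 + |q|^2 = r^2 + r^2 = 2r^2.$$
Thus every cross-pair $(p,q)$ with $p \in C_1$ and $q \in C_2$ realizes the single distance $\sqrt{2}\,r$, completely independently of the angular positions chosen on the two circles. Counting these cross-pairs gives $\lfloor n/2 \rfloor^2 \gtrsim n^2$ pairs at a common distance; since the total number of pairs is at most $\binom{n}{2} \lesssim n^2$, this count is $\approx n^2$, as desired.

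To handle $d > 4$, I would simply embed the configuration into $\mathbb{R}^d$ via the standard inclusion that pads each coordinate vector with zeros. This map is an isometry onto its image, so all pairwise distances, and in particular the $\approx n^2$ repetitions of $\sqrt{2}\,r$, are preserved.

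I do not expect any genuine obstacle here, as the construction is classical; the only points to verify are that the two circles can be populated with honestly distinct points, which is immediate by choosing distinct angles, and that the orthogonality of $P_1$ and $P_2$ is precisely what forces the cross-distance to be constant. The entire conceptual content lies in the observation that equal-radius circles sitting in orthogonal $2$-planes decouple the pairwise distance from the angular coordinates, so that the quadratic number of cross-pairs all collapse onto a single distance value.
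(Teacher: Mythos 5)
Your proposal is correct and is essentially identical to the paper's own proof: the paper also places roughly $n/2$ points on the unit circle in the $(x_1,x_2)$-plane and the rest on the unit circle in the $(x_3,x_4)$-plane, observes that every cross-pair is at distance $\sqrt{2}$ by orthogonality, and embeds the configuration into higher dimensions. The only cosmetic difference is your use of a general radius $r$ in place of the paper's radius $1$.
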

\begin{proof}
We construct the set in four dimensions, and it can easily be embedded in higher dimensional spaces. Define $$E:=\left\{(\cos a_1, \sin a_1, 0, 0): a_1=1, \dots, \left\lceil\frac{n}{2}\right\rceil \right\},$$
$$F:=\left\{(0, 0, \cos a_2, \sin a_2): a_2=1, \dots, \left\lfloor\frac{n}{2}\right\rfloor \right\}.$$
Notice that any point in $E$ is at a distance $\sqrt 2$ to any point in $F$. So the union of $E$ and $F$ is a set of $n$ points, with $\gtrsim n^2$ pairs of points that each determine the same distance.
\end{proof}
One key feature to take away from this construction is that sharpness examples for these kinds of questions often rely on low dimensional intersections of varieties in higher dimensions. Getting control on examples like this within general point sets can guide one to better upper bounds, but we believe that they are interesting in their own right. So for the following constructions, we are again looking for low dimensional intersections of high dimensional varieties. We begin with a result from \cite{BMP}, which is also described in \cite{ApfelbaumSharir}.
\begin{classictheorem}
	There is a set of $n$ points in $\mathbb{R}^4$ forming $\approx n^3$ right angles.
\end{classictheorem}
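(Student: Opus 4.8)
The plan is to produce an explicit configuration in the spirit of the Lenz example (Theorem~\ref{LenzExample}), exploiting the fact that a right angle is the \emph{linear} condition $\langle x_1-x_2,\,x_3-x_2\rangle=0$: if $\mathbb{R}^4$ is written as an orthogonal direct sum $P_1\oplus P_2$ of two $2$-planes, then any arm whose direction lies in $P_1$ is automatically perpendicular to any arm whose direction lies in $P_2$. Following \cite{BMP}, I would place the $n$ points on a variety adapted to this splitting (the analogue of the two orthogonal circles used for distances), so that for a typical vertex $x_2$ one large family of points is reached along $P_1$ and another along $P_2$, making every resulting triple $(x_1,x_2,x_3)$ a right angle for free.

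Concretely, the steps would be: (i) fix $\mathbb{R}^4=P_1\oplus P_2$ with $P_1\perp P_2$ and describe the point set as pieces aligned with $P_1$ and with $P_2$; (ii) for each admissible vertex $x_2$, isolate the two families of arms whose directions fall into $P_1$ and into $P_2$, so that each cross pair automatically satisfies $\langle x_1-x_2,\,x_3-x_2\rangle=0$; (iii) count the triples $(x_1,x_2,x_3)$ so obtained and confirm that the vertex and the two arms may be chosen with three independent factors of $n$, yielding $\gtrsim n^3$ right angles; and (iv) check that the points are distinct and that no collinear or coincident triples are counted, which is routine.

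The crux is step (iii): squeezing out a genuine third factor of $n$. Reading a Lenz-type configuration off directly produces only $\approx n^2$ right angles, since for a fixed orthogonal pair of $2$-planes the translate $x_2+P_2$ meets the $P_1$-piece in a single point, so the perpendicular arm cannot be shared among different vertices. The heart of the construction in \cite{BMP} is precisely the arrangement that lets a linear-in-$n$ set of vertices each see a linear-in-$n$ supply of mutually perpendicular arms drawn from one shared $n$-point set; this is where the degenerate, purely linear character of $\pi/2$ (as opposed to the spherical level sets governing other angles, cf.\ Theorems~\ref{AS3} and \ref{AS4}) is essential. I would carry out step (iii) by following that arrangement and then complete the distinctness bookkeeping of step (iv).
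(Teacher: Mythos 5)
There is a genuine gap: your step (iii) — the step you yourself identify as "the crux" — is never carried out. You correctly observe that a pure Lenz-type splitting $\mathbb{R}^4=P_1\oplus P_2$, with one arm in $P_1$ and the other in $P_2$, yields only $\approx n^2$ right angles, and you then defer the missing third factor of $n$ to "the arrangement in \cite{BMP}" without saying what that arrangement is. Since producing that arrangement is exactly what the statement asks for, the proposal is a plan rather than a proof. Worse, the frame you set up in steps (i)--(ii) is not the mechanism that actually works: in the correct construction the two arm directions do \emph{not} lie in complementary orthogonal $2$-planes, so no amount of bookkeeping within that frame will produce $n^3$.

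The missing idea is Thales' theorem, used to buy the factor of $n$ for the vertex. The paper takes $x_1=(-1,0,a_1,0)$, $x_2=(\cos a_2,\sin a_2,0,0)$, $x_3=(1,0,0,a_3)$ with $a_1,a_2,a_3\in\{1,\dots,\lfloor n/3\rfloor\}$. Both arms have nontrivial components in the first coordinate plane, namely $(-1-\cos a_2,-\sin a_2)$ and $(1-\cos a_2,-\sin a_2)$; these are perpendicular for \emph{every} point of the circle because $x_2$ sees the two antipodal anchors $(\pm 1,0)$ at an inscribed right angle, i.e. $-(1-\cos^2 a_2)+\sin^2 a_2=0$. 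The remaining components of the arms, $a_1e_3$ and $a_3e_4$, lie in orthogonal directions of the complementary plane and contribute nothing to the inner product. So the vertex runs over $n$ points of a circle (Thales), while each arm independently runs over $n$ points of a line ($e_3$ for one, $e_4$ for the other) — three independent factors of $n$. Your direct-sum picture captures only the second half of this; without the Thales circle for the vertex you are stuck at $n^2$, exactly as you noted.
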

\begin{proof}
For the upper bound, there are at most $n^3$ choices of triples of points.
The lower bound comes from considering
\begin{align*}
x_1&=(-1,0,a_1,0)\\
x_2&=(\cos(a_2),\sin(a_2),0,0)\\
x_3&=(1,0,0,a_3)\\
\end{align*}
where $a_1,a_2,a_3\in\{1,...,\left\lfloor n/3\right\rfloor \}$.  The triple $(x_1, x_2, x_3)$ is a right angle for any choice of $(a_1,a_2,a_3)$, so there are $\gtrsim n^3$ of these chains.   
\end{proof}
Expanding on this construction shows that without further hypotheses, the right angle 2-chain question is trivial in dimension five.
\begin{theorem}
	There is a set of $n$ points in $\mathbb{R}^5$ forming $\approx n^4$ right angle 2-chains.
\end{theorem}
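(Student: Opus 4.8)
The plan is to produce an explicit configuration, generalizing the $\mathbb{R}^4$ construction of $\approx n^3$ right angles through a ``doubled Thales'' idea: use one orthogonal coordinate plane to force the right angle at $x_2$ and a second orthogonal coordinate plane to force the right angle at $x_3$, then hang a free coordinate on each of the two anchor points to push the count from $n^2$ up to $n^4$. The matching trivial upper bound of $n^4$ for any $4$-tuple then yields $\approx n^4$.

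Concretely, I would split the $n$ points into four families of size $\lfloor n/4\rfloor$ and set
\[
x_1 = (-1,0,-1,0,a_1),\qquad x_2=(\cos a_2,\sin a_2,-1,0,0),
\]
\[
x_3=(1,0,\cos a_3,\sin a_3,0),\qquad x_4=(1,0,1,0,a_4),
\]
where $a_1,a_4$ range over $\{1,\dots,\lfloor n/4\rfloor\}$ and $a_2,a_3$ over $\lfloor n/4\rfloor$ distinct values in $(0,\pi)$. The design principle is that in coordinates $(1,2)$ the points $x_1,x_3$ sit at the antipodal pair $(-1,0),(1,0)$ while $x_2$ runs over the unit circle, so Thales' theorem produces the right angle at $x_2$; symmetrically, in coordinates $(3,4)$ the points $x_2,x_4$ sit at $(-1,0),(1,0)$ while $x_3$ runs over the unit circle, producing the right angle at $x_3$. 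The fifth coordinate carries the free parameters $a_1,a_4$, which never occur in the same angle constraint, so each of the $\approx n^4$ choices of $(a_1,a_2,a_3,a_4)$ is independent.

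The verification is a direct computation of the two dot products. For the angle at $x_2$ one finds $\overrightarrow{x_2x_1}\cdot\overrightarrow{x_2x_3} = -\sin^2 a_2 + \sin^2 a_2 = 0$: the contributions from coordinates $3,4$ vanish because $x_1$ and $x_2$ agree there, and the coordinate-$5$ contribution vanishes because $x_2$ and $x_3$ both vanish there, so only the planar Thales cancellation survives. The angle at $x_3$ is handled by the mirror-image computation, where now $x_3,x_4$ agree in coordinates $1,2$. One then checks the four points of each chain are distinct (the restrictions $a_2,a_3\in(0,\pi)$ and $a_1,a_4\ge 1$ separate the families), so each parameter tuple gives a genuine right angle $2$-chain.

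The main obstacle is the coupling between the two constraints: $x_2$ and $x_3$ are shared between both angles, so the two Thales configurations interact and must be forced not to interfere. A naive placement, for instance putting $x_2,x_3$ on two orthogonal Lenz circles centered at the origin, leaves a stray constant term $\cos^2 a_2+\sin^2 a_2=1$ in the dot product that cannot be cancelled by any choice of anchor. The key realization is to place each anchor at the true diameter endpoint matching the other point's position in the companion plane, so that the leftover cross-plane terms vanish identically; this is precisely what pins $x_1$'s third coordinate to $-1$ (equal to $x_2$'s) and $x_4$'s first coordinate to $1$ (equal to $x_3$'s). Once the anchors are fixed this way, every cross term drops out and the remaining $a_2$- and $a_3$-dependence cancels through $\cos^2+\sin^2=1$.
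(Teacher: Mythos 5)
Your construction is correct and is essentially the same as the paper's: both realize the two right angles by a ``doubled Thales'' configuration, with $x_2$ on a unit circle in one coordinate plane whose diameter endpoints are the projections of $x_1$ and $x_3$, $x_3$ on a unit circle in a second orthogonal coordinate plane whose diameter endpoints are the projections of $x_2$ and $x_4$, and a free integer parameter attached to each of $x_1$ and $x_4$ in a coordinate where $x_2$ and $x_3$ vanish. The only differences are a permutation of which coordinates carry the circles and the free parameters, so your argument matches the paper's proof up to relabeling.
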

\begin{proof}
For the upper bound, there are trivially at most $n^4$ choices of quadruples of $n$ points.
The lower bound is attained by setting
\begin{align*}
x_1&=(-1,0,a_1,0,1)\\
x_2&=(\cos(a_2),\sin(a_2),0,0,1)\\
x_2&=(1,0,0,\cos(a_3),\sin(a_3)\\
x_4&=(1,0,a_4,0,-1),
\end{align*}
where $a_1,a_2,a_3,a_4\in\{1,...,\left\lfloor n/4\right\rfloor \}$.  The quadruple $(x_1, x_2, x_3, x_4)$ is a right angle 2-chain for any choice of $(a_1,a_2,a_3,a_4)$, so there are $\gtrsim n^4$ of these chains.   
\end{proof}
By following this general idea, we can show that such questions about longer chains are trivial in higher dimensions.
\begin{theorem}\label{6dSharpRight}
For any positive integer $k$, there is a set of $n$ points in $\mathbb{R}^6$ forming $\approx n^{k+2}$ right angle $k$-chains.
\end{theorem}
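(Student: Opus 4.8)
The plan is to prove the matching upper and lower bounds separately. The upper bound $\lesssim n^{k+2}$ is immediate, since there are at most $n^{k+2}$ ordered $(k+2)$-tuples of points. For the lower bound I would build the point set as a disjoint union of $k+2$ one-parameter families $G_1,\dots,G_{k+2}$, where $G_i$ supplies the $i$-th point of a chain and each $G_i$ consists of $\approx n/(k+2)\approx n$ points; since $k$ is constant, selecting one point from each family in all possible ways yields $\gtrsim n^{k+2}$ tuples. The whole difficulty is to arrange the families so that \emph{every} such tuple is automatically a right angle $k$-chain, i.e. so that the $k$ right angle conditions hold identically as the $k+2$ parameters range freely.

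To do this I would place each family on a circle. Write $\mathbb{R}^6=W_0\oplus W_1\oplus W_2$ as an orthogonal sum of three $2$-planes, and let $G_i$ be the circle $x_i=c_i+r_i(a_i)$, where the displacement $r_i(a_i)$ runs over the radius-$\rho_i$ circle in the linear $2$-plane $V_i:=W_{i\bmod 3}$ and $c_i\in\mathbb{R}^6$ is a centre to be chosen. The right angle at an internal vertex $x_i$ (for $2\le i\le k+1$) is the identity $(x_{i-1}-x_i)\cdot(x_{i+1}-x_i)=0$. Writing $d_i^-=c_{i-1}-c_i$ and $d_i^+=c_{i+1}-c_i$ and expanding, I would separate this expression according to which of the parameters $a_{i-1},a_i,a_{i+1}$ each term involves. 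This yields sufficient conditions: the bilinear cross terms $r_a\cdot r_b$ vanish provided $V_{i-1},V_i,V_{i+1}$ are mutually orthogonal; the linear terms vanish provided $d_i^+\perp V_{i-1}$, $d_i^-\perp V_{i+1}$, and $d_i^-+d_i^+\perp V_i$; and the remaining constant-plus-quadratic part is the Thales relation $d_i^-\cdot d_i^+=-\rho_i^2$.

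The orthogonality of $V_{i-1},V_i,V_{i+1}$ is free, because the residues $i-1,i,i+1$ are distinct modulo $3$, so these three planes are exactly $W_0,W_1,W_2$. The main obstacle is the remaining system on the centres: a naive choice (cycling the three planes with $x_{i\pm1}$ placed at antipodal Thales points inside $V_i$) fails precisely because the neighbouring circles leave uncancelled cross terms. I would resolve this by passing to the center steps $s_i=c_{i+1}-c_i$, so that $d_i^-=-s_{i-1}$ and $d_i^+=s_i$, and checking that the conditions above collapse to $s_i\perp W_{(i-1)\bmod 3}$ together with the requirement that the $W_{i\bmod 3}$-components of $s_{i-1}$ and $s_i$ agree. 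Both are met by the explicit choice $s_i=u_i+u_{i+1}$ with $u_i\in W_{i\bmod 3}\setminus\{0\}$ arbitrary, upon setting $\rho_i=|u_i|$: indeed $s_{i-1}\cdot s_i=|u_i|^2$ because the $W_j$ are pairwise orthogonal, which is exactly the Thales relation, while the linear conditions reduce to orthogonalities between distinct $W_j$.

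It remains to treat the endpoints and to count. The points $x_1$ and $x_{k+2}$ carry no angle condition, so they only enter as neighbours in the conditions at $x_2$ and $x_{k+1}$, which are already covered by the formulas above; I would take them to be circles of radius $|u_1|$ and $|u_{k+2}|$ as well. Choosing the $u_i$ generically makes the $k+2$ circles pairwise disjoint, so every selected tuple consists of distinct points, and sampling $\approx n$ points on each circle produces a set of $\approx n$ points with $\gtrsim n^{k+2}$ right angle $k$-chains, matching the trivial upper bound. I expect the main obstacle to be exactly the center recurrence above: verifying that the cross terms coming from \emph{distinct} circles can be cancelled simultaneously along the entire chain, which is what forces both the period-three orthogonal-plane structure and the telescoping choice $s_i=u_i+u_{i+1}$.
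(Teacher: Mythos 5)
Your proof is correct, and its engine is the same one driving the paper's construction: circles lying in the three pairwise orthogonal coordinate $2$-planes of $\mathbb{R}^6$, assigned to chain positions with period three so that all cross terms at a vertex vanish by orthogonality, while the remaining constant-plus-quadratic part is exactly a Thales relation between the vertex circle and the two neighbouring centres. The difference is in packaging. The paper exhibits six explicit one-parameter families $x_1,\dots,x_6$ (unit circles in coordinates $(5,6),(1,2),(3,4),\dots$ about explicitly listed centres) and observes that every cyclically consecutive triple forms a right angle; a $k$-chain is then built by running through these six families cyclically, so one set of $\approx n$ points serves all $k$ simultaneously. You instead allocate one circle per chain position, derive the precise orthogonality and Thales conditions on the centres, and solve them with the telescoping choice $s_i=u_i+u_{i+1}$, $\rho_i=|u_i|$; one can check that the paper's configuration is, up to translation, exactly the periodic solution of your system in which $u_i$ cycles through $e_5,e_1,-e_3,-e_5,-e_1,e_3$. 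Your route costs more bookkeeping but buys two things: a reusable characterization of \emph{all} centre sequences that work, rather than a single verified example, and a cleaner treatment of distinctness --- your $k+2$ pairwise disjoint circles produce tuples of automatically distinct points, whereas the paper's cyclic reuse of six families permits repeated points in a long chain (position $7$ can draw the same point as position $1$), an issue the paper leaves implicit and which must be dismissed as a lower-order contribution since the chains are required to consist of distinct points.
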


\begin{proof}
	For the upper bound, there are at most $n^{k+2}$ choices of points.
	Consider the following set of points, in this order:
	\begin{align*}
	x_1&=(-1,0,1,0,\cos(a_1),\sin(a_1))\\
	x_2&=(\cos(a_2),\sin(a_2),1,0,1,0)\\
	x_3&=(1,0,\cos(a_3),\sin(a_3),1,0)\\
	x_4&=(1,0,-1,0,\cos(a_4),\sin(a_4))\\
	x_5&=(\cos(a_5),\sin(a_5),-1,0,-1,0)\\
	x_6&=(-1,0,\cos(a_6),\sin(a_6),-1,0)
	\end{align*}
	where the $a_i's$ are positive integers between $1$ and $n/6$.  

For every ordered triple $(x_i,x_{i+1},x_{i+2})$, where $i-1\in \mathbb{Z}/6\mathbb{Z}$, these points form a right angle.  Therefore, for any positive integer $k$, there are $n/6$ choices of the form $x_1$, and $n/6$ of $x_2$, and then of $x_3,x_4,x_5,x_6$, and then another $n/6$ going back to $x_1,$ and so on.  This forms a total of $\gtrsim n^{k+2}$ instances of $k$-chains.

\end{proof}

\begin{theorem}\label{6dSharpAcute}
	For a range of choices of $k$ non-right-angles $(\alpha_1,...,\alpha_k)$ there is a set of $n$ points in $\mathbb{R}^6$ forming $\approx n^{k+2}$ instances of a $k$-chain with angles $(\alpha_1,...,\alpha_k)$.  Each angle $\alpha_i$ can be chosen in some interval  $(\beta_i,\pi/2)$, where $\beta_i$ is some number in $(0,\pi/2)$ and depends on the choice of $\alpha_{i-2}$ and $\alpha_{i-1}$.
\end{theorem}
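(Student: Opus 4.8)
The plan is to build on the right-angle construction of Theorem~\ref{6dSharpRight}, but to \emph{center} all the circles at the origin and let their radii vary from point to point. Write $\mathbb{R}^6$ as the orthogonal direct sum of the three coordinate $2$-planes $P_1=\langle e_1,e_2\rangle$, $P_2=\langle e_3,e_4\rangle$, $P_3=\langle e_5,e_6\rangle$, and assign to the $i$-th point of the chain the plane $\sigma(i)=P_{((i-1)\bmod 3)+1}$, so that any three consecutive points lie in three mutually orthogonal planes. I place $x_i$ on the circle of radius $r_i$ centered at the origin inside $\sigma(i)$, i.e. $x_i$ is a vector of length $r_i$ lying in $\sigma(i)$ with every other coordinate equal to $0$; this is the only change from the shifted $\pm1$ placement used for right angles.

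The key computation is that, because consecutive points are mutually orthogonal vectors, the angle at each interior vertex depends only on the three radii and not on where the points sit on their circles. Indeed, for consecutive $x_j,x_{j+1},x_{j+2}$ one has $x_j\cdot x_{j+1}=x_{j+1}\cdot x_{j+2}=x_j\cdot x_{j+2}=0$, so
$$\overrightarrow{x_{j+1}x_j}\cdot\overrightarrow{x_{j+1}x_{j+2}}=(x_j-x_{j+1})\cdot(x_{j+2}-x_{j+1})=|x_{j+1}|^2=r_{j+1}^2,$$
while $|\overrightarrow{x_{j+1}x_j}|^2=r_j^2+r_{j+1}^2$ and $|\overrightarrow{x_{j+1}x_{j+2}}|^2=r_{j+1}^2+r_{j+2}^2$. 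Hence
$$\cos\alpha_j=\frac{r_{j+1}^2}{\sqrt{(r_j^2+r_{j+1}^2)(r_{j+1}^2+r_{j+2}^2)}},$$
independently of the circle parameters, and in particular every such angle is forced to be acute, which is exactly why the method produces non-right angles. Once the radii are fixed I discretize each of the $k+2$ circles into $\approx n/(k+2)$ points; since points in distinct planes are automatically distinct and a generic choice of the two free initial radii makes all $r_i$ distinct (hence all circles pairwise disjoint), every tuple choosing one point per circle is a tuple of distinct points forming a genuine $k$-chain of the prescribed type. The trivial upper bound $n^{k+2}$ then gives $\approx n^{k+2}$.

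It remains to realize a prescribed angle sequence by choosing radii, and this is where the interval conditions come from. Solving the displayed relation for $r_{j+2}$ gives the recurrence
$$r_{j+2}^2=r_{j+1}^2\left(\frac{r_{j+1}^2}{\cos^2\alpha_j\,(r_j^2+r_{j+1}^2)}-1\right),$$
so, starting from two freely chosen initial radii $r_1,r_2>0$, each new radius is determined by the previous two together with $\alpha_j$. The requirement $r_{j+2}^2>0$ is precisely $\cos\alpha_j<r_{j+1}/\sqrt{r_j^2+r_{j+1}^2}$, i.e. $\alpha_j\in(\beta_j,\pi/2)$ with $\beta_j=\arccos\!\big(r_{j+1}/\sqrt{r_j^2+r_{j+1}^2}\big)\in(0,\pi/2)$. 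Since $r_j$ and $r_{j+1}$ are produced by the recurrence from $\alpha_{j-2}$ and $\alpha_{j-1}$ (and the initial radii), the lower endpoint $\beta_j$ depends exactly on the two preceding angles, as claimed; one can therefore choose the $\alpha_j$ sequentially, each in its own interval $(\beta_j,\pi/2)$.

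The step I expect to be the main obstacle is pinning down this admissible range rather than the construction itself. The angle formula is clean; the delicate part is verifying that the positivity condition in the recurrence is the \emph{only} constraint, with no competing upper bound beyond $\pi/2$ (as $\cos\alpha_j\to 0^+$ one gets $r_{j+2}\to\infty$, while as $\alpha_j\to\beta_j^+$ one gets $r_{j+2}\to 0^+$, so the whole interval $(\beta_j,\pi/2)$ is attained), and confirming that the dependence of $\beta_j$ on $(\alpha_{j-2},\alpha_{j-1})$ is genuine rather than an artifact of the initial radii. I would also record that the two free initial radii supply enough slack to place the first angles $\alpha_1,\alpha_2$ in honest subintervals of $(0,\pi/2)$ before the recurrence takes over, and to arrange the generic distinctness of the radii used in the counting step.
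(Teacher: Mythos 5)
Your proposal is correct and is essentially the same construction as the paper's: the paper also places each $x_i$ on an origin-centered circle of radius $c_i$ (your $r_i$) in the three coordinate $2$-planes cycling mod $3$, derives the identical formula $\cos\alpha_i = \frac{c_{i+1}^2}{\sqrt{(c_i^2+c_{i+1}^2)(c_{i+1}^2+c_{i+2}^2)}}$, and defines $\beta_i=\arccos\bigl(c_{i+1}/\sqrt{c_i^2+c_{i+1}^2}\bigr)$ exactly as you do. Your explicit inversion of the recurrence for $r_{j+2}$ and your remarks on distinctness of the points are in fact slightly more detailed than the paper's own writeup, but the underlying argument is the same.
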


\begin{proof}
	Consider the following $k+2$ points
	
	\begin{align*}
	&x_1=(c_1\cos(a_1),c_1\sin(a_1),0,0,0,0)\\
	&x_2=(0,0,c_2\cos(a_2),c_2\sin(a_2),0,0)\\
	&x_3=(0,0,0,0,c_3\cos(a_3),c_3\sin(a_3))\\
	&\vdots\\
	&x_{k+2}.
	\end{align*}
	
Here $x_i$ has $c_i\cos(a_i)$ in the $2i-1$ (mod 6) entry, $c_i\sin(a_i)$ in the entry that is $2i$ (mod 6), and 0 in the other entries. To simplify, set $c_1=c_3$ and $c_2=1$.   So $\alpha_1=\arccos\left(\dfrac{1}{1+c_3^2}\right).$ We can choose $c_3$ to be any number in $(0,\infty),$ so $\alpha_1$ can be any angle in $(0,\pi/2)$. We set
$$\alpha_2=\arccos\left(\dfrac{c_3^2}{\sqrt{1+c_3^2}\sqrt{c_3^2+c_4^2}}\right)$$
$$=\arccos\left(\dfrac{c_3}{\sqrt{1+c_3^2}\sqrt{1+c_4^2/c_3^2}}\right).$$
Our choice of $c_4$ in $(0,\infty)$ makes $\sqrt{1+c_4^2/c_3^2}$ some number in $(1,\infty)$, so $\alpha_2>\arccos\left(\dfrac{c_3}{\sqrt{1+c_3^2}}\right)$.
	
For a general $i$, we have
$$\alpha_i=\arccos\left(\dfrac{c_{i+1}^2}{\sqrt{c_{i}^2+c_{i+1}^2}\sqrt{c_{i+2}^2+c_{i+1}^2}}\right)$$$$=\arccos\left(\dfrac{c_{i+1}}{\sqrt{c_{i+1}^2+c_i^2}\sqrt{1+c_{i+2}^2/c_{i+1}^2}}\right).$$
Our choice of $c_{i+2}$ in $(0,\infty)$ makes $\sqrt{1+c_{i+2}^2/c_{i+1}^2}$ some number in $(1,\infty)$, so $\alpha_i>\arccos\left(\dfrac{c_{i+1}}{\sqrt{c_i^2+c_{i+1}^2}}\right)$.
	
We define $\beta_i:=\arccos\left(\dfrac{c_{i+1}}{\sqrt{c_{i}^2+c_{i+1}^2}}\right)$. For angles $(\alpha_1,...,\alpha_k)$ where $$\alpha_i=\arccos\left(\dfrac{c_{i+1}^2}{\sqrt{c_{i}^2+c_{i+1}^2}\sqrt{c_{i+2}^2+c_{i+1}^2}}\right),$$ every ordered triple $(x_{i-1},x_i,x_{i+1})$ makes angle $\alpha_i$.   Therefore, for any positive integer $k$, there are $n/(k+2)$ choices of $(k+1)$-tuples of the form $x_1,...,x_{k+2}$. This gives a total $\gtrsim n^{k+2}$ instances of a $k$-chain whose angles are $(\alpha_1,...,\alpha_k)$.
\end{proof}

\begin{example}
As an example of this theorem, we can choose $c_1=c_2=\cdots c_{k+2}=1$, which makes $\alpha_1=\cdots=\alpha_k=\pi/3$.
\end{example}

\section{Pinned results}\label{pinnedSection}
\subsection{Proof of Proposition \ref{middlePinned}}
\begin{proof}
Find two rays that determine the angle $\alpha.$ Call their shared endpoint $x_2,$ and let it be in our point set. Then arrange half of the remaining points along one ray, and the rest along the other. So there are roughly $n/2$ points on the first ray, giving us $\approx n$ choices for $x_1,$ and roughly $n/2$ points on the second ray, giving us $\approx n$ choices for $x_3.$ In total, we have $\approx n^2$ triples of the form $(x_1, x_2, x_3)$ that determine our angle $\alpha$ with the same choice of middle point.
\end{proof}
\subsection{Proof of Theorem \ref{th:2dpin}}
Henceforth, we focus on pinned results where the choice of $x_1$ remains fixed. For pinned angles in $\mathbb{R}^2$ we first recall the statement of Theorem \ref{th:2dpin}.
\vspace{2mm}

\noindent {\bf Theorem \ref{th:2dpin}.}
\emph{For any $n$ points in $\mathbb{R}^2$, and any fixed angle $0<\alpha<\pi$, there are at most $\lesssim n^{4/3}$ triples of points with angle $\alpha$ starting from the origin, and this is sharp.}

\begin{proof}
Fix some point set $P$ of $n$ points in $\mathbb{R}^2$.  We'll count an $\alpha$ angle starting at the origin and including two of the $n$ points as follows:  
Pick a point $x\in P$.  There are two lines (one line when $\alpha=\pi/2$) through $x$, such that the origin, then $x$, and then any point on one of these lines forms an angle $\alpha$, and is the only choice of lines forming angle $\alpha$.  For each of these $n$ points, call $L$ the collection of these lines. 
	
In other words, an $\alpha$ angle occurs if and only if a point of $P$ lies on a line of $L$, except for the point $x$ was the middle point of the angle.  So the number of angles is the incidence of the points $P$ and lines $L$, minus $P$.  Since $|P|=n$ and $|L|=2n$ ($|L|=n$ when $\alpha=\pi/2$), we can appeal to Theorem \ref{szemTrot} to conclude that the number of these incidences is $\lesssim n^{4/3}.$    

We now turn our attention to the sharpness, which is realized with the following construction, motivated by classical sharpness examples for the Theorem \ref{szemTrot}:

	\begin{align*}
	P&=\left\{(a,b)\in \mathbb{Z}^2:1\le a\le n^{1/3}, 1\le b\le n^{2/3}\right\}\\
	L&=\left\{y=ax+b:(a,b)\in \mathbb{Z}^2,1\le a\le n^{1/3}, 1\le b\le \frac{n^{2/3}}{2}\right\}
	\end{align*}
	
	Note that $|P|=n$, $|L|=n/2$, and each line of $L$ is incident to $n^{1/3}$ points.
	For each line in $L$, there are two pivot points (one point if $\alpha=\pi/2$) on the line (not necessarily coming from $P$) forming an angle of $\alpha$ with origin.  Combined through these two pivot points on $L$, there are $n^{1/3}$ points on the line intersecting $P$ forming an $\alpha$ angle. For each line, call the collection of these points $Q$. Since $|Q|=n$ (or $n/2$ if $\alpha=\pi/2$). So $|P\cup Q|\approx n$.  So we have a collection of $\approx n$ points forming $|L|n^{1/3}\approx n^{4/3}$ instances of $\alpha$-angles.  
	
\end{proof}

For pinned chains in $\mathbb{R}^2$ we first recall the upper bound of Theorem \ref{th:2dchainpinupper}.

\noindent {\bf Theorem \ref{th:2dchainpinupper}.}
\emph{For any set $E$ of $n$ points in $\mathbb{R}^2$, integer $k\ge 2$ and angles $(\alpha_1,...,\alpha_k)$, the number of $k$-chains of type $(\alpha_1,...,\alpha_k)$ starting from the origin is}

	$$\lesssim \begin{cases}
	n^{\frac{k-1}{2}+1}\log(n)& k\text{ is odd}\\
	n^{\frac{k}{2}+1}& k\text{ is even}
	\end{cases}$$

\begin{proof}
	For $k=2,$ we can choose points $x_3$ and $x_4$ in $\lesssim n^2$ ways.  There are two lines through $x_3$ that form an angle of $\alpha_2$ with the $\overline{x_3x_4}$. So $x_2$ must lie on one of these lines.  In order for $\angle x_1x_2x_3=\alpha_1$, there is only one choice of $x_2$ on each of the two lines.  Since $x_1=(0,0)$ is fixed, there are a total of $\lesssim n^2$ choices.
	
	For $k\geq 3,$ we can choose an $k$-chain (unpinned) of type $(\alpha_3,...,\alpha_k)$ in $\lesssim n^{\frac{k-1}{2}+1}\log(n)$ ways if $k$ is odd and $\lesssim n^{\frac{k}{2}+1}$ ways if $k$ is even. By the same argument above, there are two choices of $x_2$, and $x_1$ is fixed.  This concludes the proof.
\end{proof}

We next recall the lower bound from Theorem \ref{th:2dchainpinlower}.
\vspace{2mm}

\noindent {\bf Theorem \ref{th:2dchainpinlower}.}
\emph{For any ordered set of angles $(\alpha_1,...,\alpha_k)$, there is a set of $n$ points in $\mathbb{R}^2$ forming $\gtrsim n^{\left\lfloor{\frac k2}\right\rfloor+1}$ instances of $k$-angle chains of type $(\alpha_1,...,\alpha_k)$ starting at the origin.} 

\begin{proof}
	Given $(\alpha_1, \dots, \alpha_k),$ set $m=\lfloor 2n/k \rfloor.$ Arrange $m$ points, $p_1, \dots, p_m,$ in order away from the origin, along the $x$-axis, which we will call $\ell_1$, then draw the line $\ell_2$ so that it intersects the line $\ell_1$ at an angle of $\alpha_1-\alpha_2$ at the origin. Now for each of the points $p_j$ on $\ell_1,$ except for $p_m,$ the one furthest from the intersection of $\ell_1$ and $\ell_2$, put a point, $p_{m+j}$, on $\ell_2$ so that $\angle(p_{j'},p_j, p_{m+j})=\alpha_1$ for any $j'>j.$ Notice that by construction, we will also have that $\angle(p_j, p_{m+j}, p_{m+j''})=\alpha_2$ for any $j''<j.$ Now rotate the whole set so that $\ell_2$ coincides with the $x$-axis, and draw $\ell_3$ so that it intersects $\ell_2$ at the origin in an angle of $\alpha_3-\alpha_4$, and continue. 
	Now start at one fixed point. At each stage, choose points so that they don't overlap with other points just in case the set wraps around the origin after a number of rotations. Notice that we can form the desired number of angle $k$-chains by picking $x_1$ and $x_2$ from $\approx n^2$ pairs of points from $\ell_1$, then a fixed point $x_3$, determined by $x_2$, from $\ell_2$, then choosing $x_4$ from $\approx n$ points on $\ell_2$ so that $\angle(x_2,x_3,x_4)=\alpha_2,$ and so on.
\end{proof}

%\begin{theorem}\label{th:2dpinlower}
%	There is a set of $\Theta(n)$ points in $\mathbb{R}^3$ forming $\Omega(n^{3/2})$ right angles starting at the origin.  
%\end{theorem}
%\begin{proof}
%	Construct the following point/plane configuration:
%	\begin{align*}
%	P&=\left\{(a,b,c)\in \mathbb{Z}^:1\le a\le n^{1/4}, 1\le b\le n^{1/4}, 1\le c\le n^{1/2}\right\}\\
%	L&=\left\{y=ax+bx+c:(a,b,c)\in \mathbb{Z}^3,1\le a\le n^{1/4},1\le b\le n^{1/4}, 1\le c\le \frac{n^{1/2}}{2}\right\}
%	\end{align*}
%	
%	Note that $|P|=n$, $|L|=n/2$, and each plane of $L$ is incident to $n^{1/2}$ points.
%	For each plane in $L$, let $q$ be the closest point to the origin.   There are $n^{1/2}$ points on the plane intersecting $P$ forming a right angle. Call the collection of these points $Q$.  $|Q|=n/2$. So $|P\cup Q|=\Theta(n)$.  So we have a collection of $\Theta(n)$ points forming $|L|*n^{1/2}=n^{3/2}$ right angles.  
%	
%\end{proof}

In $\mathbb{R}^3$ we show that already for a single right angle the problem is trivial without further restrictions.

\begin{theorem}\label{3pinned}
There is a set of $\approx n$ points in $\mathbb{R}^3$ forming $\approx n^2$ right angles starting at the origin.  
\end{theorem}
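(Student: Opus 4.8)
The plan is to exhibit a Lenz-type configuration: a circle's worth of ``apex'' points together with a line's worth of ``endpoint'' points, arranged so that \emph{every} apex--endpoint pair produces a right angle at the apex with the remaining ray pointing to the origin. Write $x_1$ for the pinned point at the origin $O$. Since the right angle sits at $x_2$, the condition $\angle(O,x_2,x_3)=\pi/2$ is equivalent to $\langle O-x_2,\,x_3-x_2\rangle=0$, that is, to
$$\langle x_2,\,x_3-x_2\rangle=0,\qquad\text{equivalently}\qquad \langle x_2,x_3\rangle=|x_2|^2.$$
Geometrically this says $x_3$ lies on the plane through $x_2$ with normal direction $\overrightarrow{Ox_2}$; equivalently, by Thales, $x_2$ lies on the sphere having the segment from $O$ to $x_3$ as a diameter.

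First I would search for a circle $C$ and a line $\ell$ so that the displayed condition holds simultaneously for \emph{all} $x_2\in C$ and $x_3\in\ell$. A line $\ell=\{p+td\}$ is contained in the plane $\{y:\langle x_2,y\rangle=|x_2|^2\}$ exactly when $\langle x_2,d\rangle=0$ and $\langle x_2,p\rangle=|x_2|^2$, so I want every apex $x_2$ to lie in the plane through $O$ with normal $d$ and, within that plane, to satisfy $\langle x_2,p\rangle=|x_2|^2$ — which is precisely a Thales circle through $O$. Concretely, take $d=(0,0,1)$, let $C$ be the circle $\{(u,v,0):u^2+v^2=u\}$ in the $xy$-plane (the circle on diameter from $O$ to $(1,0,0)$), and let $\ell=\{(1,0,t):t\in\mathbb{R}\}$ be the vertical line through $(1,0,0)$. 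A one-line check then confirms the design: for $x_2=(u,v,0)\in C$ and $x_3=(1,0,t)\in\ell$,
$$\langle x_2,\,x_3-x_2\rangle=u(1-u)-v^2=u-(u^2+v^2)=0.$$

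To finish, I would place $\approx n/2$ distinct points on $C$ and $\approx n/2$ distinct points on $\ell$ (discarding $O$ itself and the intersection point $(1,0,0)$ so that all chosen points are distinct from the pin and from one another), yielding a set of $\approx n$ points. Each choice of one apex from $C$ and one endpoint from $\ell$ produces a right angle $\angle(O,x_2,x_3)$, so there are $\gtrsim (n/2)^2\approx n^2$ such right angles starting at the origin. Since fixing $x_1=O$ leaves at most $n^2$ choices of the ordered pair $(x_2,x_3)$, this matches the trivial upper bound and gives $\approx n^2$.

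The only real obstacle is the \emph{design} step — recognizing that an entire circle of apexes can share one common line of admissible endpoints. This is exactly the ``low-dimensional intersection of varieties'' phenomenon emphasized after the Lenz example: the defining relation is bilinear in $(x_2,x_3)$, and forcing it to hold for every pair pins $C$ and $\ell$ down to a Thales circle in a plane through $O$ together with the orthogonal line through the far endpoint of its diameter. Once this configuration is identified, both the verification and the count are immediate.
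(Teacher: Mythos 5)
Your proposal is correct and is essentially the paper's own construction: the paper also takes a Thales circle through the origin in the $xy$-plane (points $(1+\cos i,\sin i,0)$, i.e.\ the circle on the diameter from the origin to $(2,0,0)$) together with the vertical line through the far end of that diameter (points $(2,0,j)$), and verifies the right angle at $x_2$ by the same dot-product computation; your configuration is just a rescaled copy of this. The paper even closes with the same remark you make, that the planes normal to each apex point all meet in the common vertical line.
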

\begin{proof}
Let $P$ be the set of $n/2$ points $y_i \in \mathbb R^3$ with coordinates $(1+\cos i, \sin i, 0),$ where $i=1, \dots, n/2.$ Let $Q$ be the set of $n/2$ points $z_j \in \mathbb R^3$ with coordinates $(2, 0 ,j),$ where $j=1, \dots, n/2.$ Now choose an arbitrary pair of $y_i \in P$ and $z_j \in Q.$ We can verify that  the origin, $y_i$, and $z_j$ form a right angle by computing the dot product of the vector $v=y_i-(0,0,0)$ with the vector $w=y_i-z_j.$
\begin{align*}
v\cdot w &= (1+\cos i, \sin i, 0) \cdot (-1+\cos i, \sin i, j)\\
&= (-1 + \cos i - \cos i + \cos^2 i) + (\sin^2 i) + 0\\
& = -1 + (\sin^2 i + \cos^2 i) = 0.
\end{align*}
The idea is that the planes normal to each choice of $y_i$ containing that particular point $y_i$ all meet at the line containing $Q.$ One could in principle choose any set of about $n$ points on this circle containing $P$ and any set of about $n$ points on the line containing $Q$ to get the same result. 
\end{proof}

We also have the following pinned versions of Theorems \ref{6dSharpRight} and \ref{6dSharpAcute}.

\begin{theorem}
	For any positive integer $k$, there is a set of $n$ points in $\mathbb{R}^6$ forming $\approx n^{k+1}$ right angle $k$-chains starting from the origin.
\end{theorem}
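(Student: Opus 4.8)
The plan is to obtain this construction directly from the one built for Theorem~\ref{6dSharpRight} by pinning its first vertex and translating. Recall that in that construction the six point types $x_1,\dots,x_6$ are each parameterized by a single circular parameter $a_i$, and the crucial feature is that \emph{every} consecutive triple $(x_i,x_{i+1},x_{i+2})$ forms a right angle for \emph{every} choice of the parameters. In particular, the right-angle condition on any triple never couples the parameter of one vertex to those of its neighbors.

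First I would fix a single value of the parameter $a_1$, so that $x_1$ becomes one specific point of the first type rather than ranging over its $\approx n/6$ possibilities. Because each right-angle condition holds identically in the parameters, this does not constrain any of the remaining vertices: the points $x_2,\dots,x_{k+2}$ still range independently over $\approx n/6$ values each while preserving the full right-angle-chain structure. Next I would translate the entire configuration by $-x_1$, so that the pinned vertex sits at the origin. Since angles are invariant under translation, every consecutive triple remains a right angle, and we now have a configuration of $\approx n$ points in which every chain we count begins at the origin.

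It then remains to count. With $x_1$ pinned at the origin, the remaining $k+1$ vertices may be chosen essentially freely, each over $\approx n/6$ parameter values, avoiding the finitely many coincidences needed to keep the points of the $(k+2)$-tuple distinct, exactly as in Theorem~\ref{6dSharpRight}. This produces $\gtrsim n^{k+1}$ right-angle $k$-chains starting at the origin. The matching upper bound is immediate: once $x_1$ is fixed there are at most $n^{k+1}$ ways to choose the remaining $k+1$ points, so the count is $\approx n^{k+1}$.

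The only step that genuinely needs checking is the claim that pinning $x_1$ leaves the other $\approx n$ choices per vertex intact; this is precisely the assertion that in Theorem~\ref{6dSharpRight} the right-angle property of each triple is independent of the angular parameters, which is exactly how that theorem's verification proceeds. Beyond confirming that the origin indeed lies in the translated set and that distinctness across the chain is maintained, both inherited verbatim from the unpinned construction, I do not expect a substantive obstacle.
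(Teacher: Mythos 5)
Your proposal is correct and is essentially the paper's own argument: the paper's pinned configuration is precisely the unpinned construction of Theorem~\ref{6dSharpRight} translated by $(1,0,1,0,1,0)$ so that the origin joins the configuration (each circle family becomes $1+\cos$, $\sin$, passing through the origin), and the count $\approx n^{k+1}$ follows, exactly as you argue, from the fact that every right-angle condition holds identically in the angular parameters, plus the trivial upper bound once the first point is fixed. The only cosmetic difference is which vertex gets placed at the origin --- you pin an arbitrary point of the first family and translate, while the paper's translate puts the origin at a degenerate parameter value of one of the circle families --- so the two proofs coincide in substance.
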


\begin{proof}
	For a set of $n$ points, there are $k+1$ choices of $n$ points not including the origin to form a $k$ chain.  So there are  $\lesssim n^{k+1}$ $k$ angle chains starting from the origin.
	
	For the lower bound, consider the following set of points, in this order:
	\begin{align*}
	x_1&=(0,0,2,0,1+\cos(a_1),\sin(a_1))\\
	x_2&=(1+\cos(a_2),\sin(a_2),2,0,2,0)\\
	x_3&=(2,0,1+\cos(a_3),\sin(a_3),2,0)\\
	x_4&=(2,0,0,0,1+\cos(a_4),\sin(a_4))\\
	x_5&=(1+\cos(a_5),\sin(a_5),0,0,0,0)\\
	x_6&=(0,0,1+\cos(a_6),\sin(a_6),0,0)
	\end{align*}
	where the $a_i's$ are positive integers between $1$ and $n/6$.  
	
	For every ordered triple $(x_i,x_{i+1},x_{i+2})$, where $i-1\in \mathbb{Z}/6\mathbb{Z}$, these points form a right angle.  Therefore, for any positive integer $k$, there are $n/6$ choices of the form $x_1$, and $n/6$ of $x_2$, and then of $x_3,x_4,x_5,x_6$, and then another $n/6$ going back to $x_1,$ etc.  This forms a total of $\gtrsim n^{k+1}$  $k$-chains starting from the origin.
	
	\end{proof}
\begin{theorem}
	For a range of choices of $k$ non-right-angles $(\alpha_1,...,\alpha_k)$ there is a set of $n$ points in $\mathbb{R}^6$ forming $\approx n^{k+1}$ instances of $k$-chains with angles $(\alpha_1,...,\alpha_k)$ starting from the origin.  Each angle $\alpha_i$ can be chosen in some interval  $(\beta_i,\pi/2)$, where $\beta_i$ is some number in $(0,\pi/2)$ and depends on the choice of $\alpha_{i-2}$ and $\alpha_{i-1}$.
\end{theorem}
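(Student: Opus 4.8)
The plan is to adapt the construction of Theorem \ref{6dSharpAcute} by designating the origin as the pinned first vertex $x_1$ while keeping the remaining $k+1$ points on the same cyclic family of circles, so that exactly one degree of freedom is removed and the count drops from $n^{k+2}$ to $n^{k+1}$. Concretely, I would set $x_1 = \vec 0$ and, for $2 \le i \le k+2$, place $x_i = c_i(\cos(a_i)\,e_{2i-1} + \sin(a_i)\,e_{2i})$, where the indices $2i-1$ and $2i$ are read modulo $6$, the $e_j$ are the standard basis vectors of $\mathbb{R}^6$, the positive constants $c_i$ are to be chosen, and each $a_i$ ranges over the integers $1, \ldots, \lfloor n/(k+1)\rfloor$. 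This is precisely the point family of Theorem \ref{6dSharpAcute} with its first point collapsed to the origin.

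The key observation is that replacing $x_1$ by the origin does not destroy the crucial feature that every angle is independent of the parameters $a_i$. First I would record that consecutive points $x_i, x_{i+1}, x_{i+2}$ occupy three pairwise disjoint coordinate pairs (the supports cycle with period $3$ among $(1,2)$, $(3,4)$, $(5,6)$), so that $x_i \cdot x_{i+1} = x_{i+1}\cdot x_{i+2} = x_i \cdot x_{i+2} = 0$ regardless of the $a_i$. For the interior angles $\angle(x_j, x_{j+1}, x_{j+2})$ with $j \ge 2$ this yields exactly the same cosine $\frac{c_{j+1}^2}{\sqrt{c_j^2 + c_{j+1}^2}\sqrt{c_{j+1}^2 + c_{j+2}^2}}$ as in the unpinned theorem. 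For the pinned angle at $x_2$, the two relevant vectors are $x_1 - x_2 = -x_2$ and $x_3 - x_2$; using $x_2 \cdot x_3 = 0$ gives $(-x_2)\cdot(x_3 - x_2) = |x_2|^2 = c_2^2$ and hence $\cos\alpha_1 = \frac{c_2}{\sqrt{c_2^2 + c_3^2}}$, again free of the $a_i$.

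With parameter-independence in hand, the angle ranges and the counting follow the unpinned argument. Letting $c_3/c_2$ range over $(0,\infty)$ makes $\alpha_1$ sweep all of $(0,\pi/2)$, and choosing the remaining $c_i$ sequentially realizes each $\alpha_j$ for $j\ge 2$ in an interval $(\beta_j,\pi/2)$ with $\beta_j$ depending on $c_j, c_{j+1}$, i.e. on the previously fixed angles $\alpha_{j-2}, \alpha_{j-1}$, exactly as before. Since $x_1$ is now fixed at the origin while each of $x_2, \ldots, x_{k+2}$ still admits $\approx n/(k+1)$ choices, and distinct parameter tuples give distinct chains (the origin is distinct from every $x_i$ with $i\ge 2$, and each $a_i$ can be read off coordinatewise), we obtain $\gtrsim n^{k+1}$ chains of the prescribed type; the matching trivial upper bound is $\lesssim n^{k+1}$ because fixing $x_1$ leaves only $k+1$ free points.

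I expect the only genuine point to verify is the one flagged above: that collapsing $x_1$ to the origin preserves the independence of the first angle from the circle parameters and still leaves $\alpha_1$ adjustable throughout $(0,\pi/2)$. Both facts rest solely on the disjointness of the coordinate supports of $x_2$ and $x_3$, so there is no real obstacle; the remainder is a transcription of the proof of Theorem \ref{6dSharpAcute} with one fewer free point.
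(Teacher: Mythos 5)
Your proposal is correct and follows essentially the same route as the paper: the paper uses the identical construction from Theorem \ref{6dSharpAcute} and pins the first point by setting $c_1=0$, which is exactly your choice $x_1=\vec 0$, yielding the same cosine formulas (in particular $\cos\alpha_1 = c_2/\sqrt{c_2^2+c_3^2}$) and the same count of $\gtrsim n^{k+1}$ chains from the $k+1$ free parameters. Your explicit verification of parameter-independence via the disjoint coordinate supports is a more detailed write-up of what the paper cites implicitly from the unpinned proof, but it is the same argument.
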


\begin{proof}
	Consider the same $k+2$ points as in Theorem \ref{6dSharpAcute}:
	
	\begin{align*}
	&x_1=(c_1\cos(a_1),c_1\sin(a_1),0,0,0,0)\\
	&x_2=(0,0,c_2\cos(a_2),c_2\sin(a_2),0,0)\\
	&x_3=(0,0,0,0,c_3\cos(a_3),c_3\sin(a_3))\\
	&\vdots\\
	&x_{k+2}.
	\end{align*}
Here $x_i$ has $c_i\cos(a_i)$ in the $2i-1$ (mod 6) entry, $c_i\sin(a_i)$ in the entry that is $2i$ (mod 6), and 0 in the other entries. Setting $c_1=0$ makes $x_1$ the origin.  As in the proof of Theorem \ref{6dSharpAcute}, we can define
$$\beta_i:=\arccos\left(\dfrac{c_{i+1}}{\sqrt{c_{i}^2+c_{i+1}^2}}\right).$$
For angles $(\alpha_1,...,\alpha_k)$ where
$$\alpha_i=\arccos\left(\dfrac{c_{i+1}^2}{\sqrt{c_{i}^2+c_{i+1}^2}\sqrt{c_{i+2}^2+c_{i+1}^2}}\right),$$
every ordered triple $(x_i,x_{i+1},x_{i+2})$ makes angle $\alpha_i$. Therefore, for any positive integer $k$, there are $n/(k+1)$ choices of $(k+1)$-tuples of the form $x_2,..., x_{k+2}$. This gives a total $\gtrsim n^{k+1}$ instances of $k$-chains whose angles are $(\alpha_1,...,\alpha_k)$. 
\end{proof}

\end{document}